\documentclass{article}
\usepackage{graphicx} 
\usepackage{comment} 
\usepackage{xcolor}
\usepackage{amsthm}

\usepackage{mathtools}
\usepackage{booktabs}
\usepackage{pifont}

\usepackage{todonotes}
\usepackage[normalem]{ulem}

\title{Counting Spinal Tree-Child Networks via Word Encodings and Generating Functions}

\author{Pau Vives$^{1}$, Anna de Mier$^{2}$, Gabriel Cardona$^{1}$, Joan Carles Pons$^{1,*}$}

\date{\today}
\usepackage{tikz}
\usepackage{amssymb}

\usepackage{amsmath}
\usepackage{tkz-graph}
\usetikzlibrary{shapes,positioning,calc}
\tikzstyle{mi}=[circle,draw,inner sep=0.25pt,minimum size=1.5mm]
\tikzstyle{x}=[circle,draw,inner sep=0.25pt,minimum size=1.5mm]
\tikzstyle{bx}=[rectangle,draw,inner sep=0.25pt,line width=0.2pt,minimum size=1.75mm]
\tikzstyle{bxl}=[rectangle,draw,inner sep=0.25pt,line width=0.2pt,minimum size=2.5mm]
\tikzstyle{label}=[rectangle,inner sep=-0.25pt, fill = white]
\tikzstyle{label2}=[circle,inner sep=-2pt,mimum size=0.1mm, fill = white]
\tikzstyle{ppalpath}=[style={->},
decoration={snake,pre length=0.05cm, post length=0.05cm,
amplitude=.4mm,segment length=2mm},
decorate]
\tikzset{
    between/.style args={#1 and #2}{
         at = ($(#1)!0.5!(#2)$)
    }
}
\newtheorem{thm}{Theorem} 

\newtheorem{prop}[thm]{Proposition} 
\newtheorem{lem}[thm]{Lemma} 
\newtheorem{coro}[thm]{Corollary} 
\newtheorem{conj}[thm]{Conjecture}

\tikzset{
    amunt/.style args={#1 and #2}{
         at = ($(#1)!0.25!(#2)$)
    }
}

\newcommand{\STC}{\mathcal{STC}}
\newcommand{\SCTC}{\mathcal{SCTC}}
\newcommand{\NLSCTC}{\mathcal{NLSCTC}}
\newcommand{\TC}{\mathcal{TC}}
\newcommand{\MT}{\mathcal{MT}}
\newcommand{\NLMT}{\mathcal{NLMT}}
\newcommand{\NLSTC}{\mathcal{NLSTC}}
\newcommand{\STCch}{\mathcal{STC}^{\text{ch}}}
\newcommand{\STCnc}{\mathcal{STC}^{\text{nc}}}
\newcommand{\NLSTCch}{\mathcal{NLSTC}^{\text{ch}}}
\newcommand{\NLSTCnc}{\mathcal{NLSTC}^{\text{nc}}}

\newcommand{\cA}{\mathcal{A}}
\newcommand{\cB}{\mathcal{B}}
\newcommand{\cC}{\mathcal{C}}
\newcommand{\cL}{\mathcal{L}}
\newcommand{\cR}{\mathcal{R}}

\DeclareMathOperator{\Seq}{Seq}
\newcommand{\boxprod}{\mathop{\prescript{\square}{}{\star}}}

\begin{document}

\maketitle

\begin{center}
\vspace{-8pt}

$^{1}$Department of Mathematics and Computer Science\\
University of the Balearic Islands, Palma 07122, Spain

\vspace{6pt}

$^{2}$Departament de Matem\`atiques and IMTech\\
Universitat Polit\`ecnica de Catalunya - Barcelona Tech (UPC)\\
  Barcelona 08034, Spain

\vspace{8pt}

$^{*}$Corresponding author: joancarles.pons@uib.es
\end{center}

\section*{Abstract}

We study the enumeration of spinal tree-child phylogenetic networks, a rigid family of tree-child networks in which all internal vertices lie on a single root--to--leaf path. We provide two complementary combinatorial frameworks. First, we introduce a word model: unlabeled spinal networks correspond to a suitable class of restricted words with fixed multiplicities, taken modulo a simple relabeling equivalence, which yields an explicit closed enumeration. Second, we develop a symbolic-method approach based on a marked version of trees that admits a clean recursive specification; its boxed-product translation leads to a solvable bivariate generating function and a direct derivation of the coefficients. 

\section{Introduction}

Phylogenetic networks extend phylogenetic trees by allowing reticulate evolutionary events such as hybridization or horizontal gene transfer \cite{morrison2005networks,huson2006application, huson2010phylogenetic}. From a combinatorial perspective, their enumeration is markedly more delicate than for trees: even in the rooted binary setting there are infinitely many networks on a fixed leaf set unless additional structural constraints are imposed. Consequently, the main counting questions are formulated within restricted classes and typically parametrised by the number of leaves $n$ and reticulation vertices $k$.

Exact and asymptotic counting results are known for several families (e.g.\ level-$1$ or $2$, and galled networks) via techniques including generating functions, recursive decompositions, and algorithmic enumeration (see  \cite{bouvel2020counting, gunawan2020counting} to name a few). Among the most studied classes are \emph{tree-child} networks, which satisfy a local condition ensuring that each internal vertex has at least one non-reticulation child \cite{cardona2008comparison}. Despite significant progress, obtaining a closed formula for $|\TC_{n,k}|$, the cardinality of the set of tree-child networks with \(n\) leaves and \(k\) reticulations, in full generality has been challenging \cite{fuchs2022counting,  fuchs2018counting, cardona2020counting, cardona2019generation, fuchs2020counting}.

In this direction, Pons and Batle (see \cite{pons2021combinatorial}) proposed a conjectural correspondence between tree-child networks and a family of restricted words, yielding a precise formula for $|\TC_{n,k}|$ in terms of word counts. Very recently, this conjecture appears to have been resolved \cite{lin2026proof} and prior work in this direction includes \cite{fuchs2021short,chang2024enumerative}. Motivated by this word-based viewpoint, we revisit a particularly rigid and tractable subclass: \emph{spinal tree-child} networks. These are tree-child networks admitting a \emph{spine}, i.e.\ a root-to-leaf path containing all internal vertices
\cite{francis2024phylogenetic}. Spinal networks have been previously enumerated using expanding covers, providing bijective encodings and explicit formulas for several spinal subclasses \cite{francis2025counting}.

The goal of this paper is twofold. First, we recover the enumeration of spinal tree-child networks through a word-encoding approach in the spirit of \cite{pons2021combinatorial}.
Second, we develop a symbolic-method approach: we give combinatorial specifications for spinal networks and derive generating functions that yield the same counting formulas.

The paper is organised as follows. Section~\ref{sec:Background} reviews the required background on phylogenetic networks and summarises the relevant previous work. Section~\ref{sec:words} establishes our word encodings for spinal tree-child networks and derives the enumeration of the unlabeled and labeled classes. Section~\ref{sec:gf} develops the symbolic specifications and generating-function derivations.

\section{Preliminaries}\label{sec:Background}

This section recalls the main definitions and notation concerning phylogenetic networks, with particular emphasis on tree-child and spinal networks. We also summarize the relevant previous work on word encodings and on the enumeration of spinal networks.

\subsection{Phylogenetic networks}

An \emph{unlabeled (binary) phylogenetic network} is a directed acyclic graph (DAG) \(N=(V,A)\) such that every vertex is exactly one of the following types:
\begin{itemize}
\item a \emph{root}, with indegree~0 and outdegree~1, and there is a single such node;
\item a \emph{leaf}, with indegree~1 and outdegree~0;
\item a \emph{tree vertex}, with indegree~1 and outdegree~2;
\item a \emph{reticulation vertex} (or simply \emph{reticulation}), with indegree~2 and outdegree~1.
\end{itemize}

A \emph{(binary) phylogenetic network} on a finite set \(X\) of \emph{taxa} is an unlabeled phylogenetic network together with a bijection between \(X\) and the set of leaves $L$. We will often identify the leaf set with $X$ and refer to the leaves as elements of $X$. Also, we will usually take the taxa set to be $X=[n]=\{1,2,\dots,n\}$, and we will refer to the leaf corresponding to $i$ as $i$.
Vertices in \(V \setminus L\)   are called \emph{internal vertices}. The unique outgoing arc of the root is referred to as the \emph{root arc}, and arcs directed into reticulation vertices are called \emph{reticulation arcs}.  
A phylogenetic network with no reticulation vertices is a \emph{tree}, denoted by \(T\).



Two networks \(N_1 = (V_1, A_1)\) and \(N_2 = (V_2, A_2)\) on the same leaf set \(X\) are \emph{isomorphic} if there exists a bijection \(\varphi: V_1 \to V_2\) such that $\varphi(x) = x$ for all $x\in X$ and $uv\in A_1$ if and only if $\varphi(u)\varphi(v)\in A_2$.

\begin{figure}[h]
\centering
    \begin{tikzpicture}[scale=0.5]
        \draw (0,0)                 node[mi]    (root1)      {};
        \draw (0,-1)                 node[mi]    (root)      {};
        \draw (-0.75,-2)                 node[mi]    (A)      {};
        \draw (-1.5,-3)                 node[mi]    (A1)      {};
        \draw (1.5,-3)                 node[mi]    (B)      {};
        \draw (0,-3)                 node[mi]    (A2)      {};
        \draw (-0.5,-4)                 node[bx]    (r1)      {};
        \draw (0.5,-4)                 node[bx]    (r2)      {};
        \draw (0.5,-4.75)                 node[mi]    (r12)      {\scriptsize $3$};
        \draw (-0.5,-4.75)                 node[mi]    (r22)      {\scriptsize $2$};
        \draw (-2,-4.75)                 node[mi]    (A11)      {\scriptsize $1$};
        \draw (2,-4.75)                 node[mi]    (B11)      {\scriptsize $4$};


        \draw[style={-}](root1)to(root);
        \draw[style={-}](A)to(root);
        \draw[style={-}](B)to(root);
        \draw[style={-}](A)to(A1);
        \draw[style={-}](A)to(A2);
        \draw[style={-}](A2)to(r1);
        \draw[style={-}](A2)to(r2);
        \draw[style={-}](A1)to(r1);
        \draw[style={-}](B)to(r2);
        \draw[style={-}](B)to(B11);
        \draw[style={-}](A1)to(A11);
        \draw[style={-}](r1)to(r22);
        \draw[style={-}](r2)to(r12);


        \draw (8+0,0)                 node[mi]    (root1)      {};
        \draw (8+0,-1)                 node[mi]    (root)      {};
        \draw (8+-0.75,-2)                 node[mi]    (A)      {};
        \draw (8+-1.5,-3)                 node[mi]    (A1)      {};
        \draw (8+1.5,-3)                 node[mi]    (B)      {};
        \draw (8+0,-3)                 node[mi]    (A2)      {};
        \draw (8+-0.5,-4)                 node[bx]    (r1)      {};
        \draw (8+0.5,-4.75)                 node[mi]    (r12)      {\scriptsize $3$};
        \draw (8+1.1,-4.75)                 node[mi]    (r13)      {\scriptsize $2$};
        \draw (8+-0.5,-4.75)                 node[mi]    (r22)      {\scriptsize $4$};
        \draw (8+-2,-4.75)                 node[mi]    (A11)     {\scriptsize $1$};
        \draw (8+2,-4.75)                 node[mi]    (B11)     {\scriptsize $5$};

        \draw[style={-}](root1)to(root);
        \draw[style={-}](A)to(root);
        \draw[style={-}](B)to(root);
        \draw[style={-}](A)to(A1);
        \draw[style={-}](A)to(A2);
        \draw[style={-}](A2)to(r1);
        \draw[style={-}](A1)to(r1);
        \draw[style={-}](B)to(B11);
        \draw[style={-}](A1)to(A11);
        \draw[style={-}](r1)to(r22);
        \draw[style={-}](A2)to(r12);
        \draw[style={-}](B)to(r13);

 \node (label) at (0.35,-3.1) {\tiny $u$};
 \node (label) at (0,-6) {\small$N_1$};

 \node (label) at (8,-6) {\small$N_2$};
    \end{tikzpicture}

   \caption{\small Two phylogenetic networks $N_1$ and $N_2$ on $[4]$ and $[5]$, respectively. Square nodes represent reticulations, while circular nodes represent internal tree vertices (the root and leaves are also drawn as circles), with arcs directed downwards. Observe that $N_1$ is not tree-child, since the vertex $u$ has only reticulation children, whereas $N_2$ is tree-child.}
    \label{fig:phyloexample}
\end{figure}

A phylogenetic network \(N\) is \emph{tree-child} if every internal vertex has at least one child that is not a reticulation vertex \cite{cardona2008comparison}.  
We denote by \(\TC_{n,k}\) the class of tree-child networks with \(n\) leaves and \(k\) reticulations.


The class of spinal tree-child networks, denoted $\STC_{n,k}$, consists of tree-child networks that contain a \emph{spine}, i.e., a path from the root to a leaf that passes through all internal vertices \cite{francis2024phylogenetic}. Note that since the spine contains all internal vertices, the only nodes outside the spine are the leaves, but notice that the last vertex of the spine is also a leaf. Hence the nodes outside the spine are exactly the set of leaves, except for the leaf that is the last vertex of the spine.
Another fact that will be used (even without further mention) is that each tree vertex has two children, one of which is its next vertex in the spine, and the other one might be a leaf or a reticulation.

\subsection{Previous work}


We recall here the two complementary lines of previous work that motivate our approach. The first is the word-based formulation of the counting problem for general tree-child networks due to Pons and Batle \cite{pons2021combinatorial}. The second is the exact enumeration of spinal tree-child networks obtained in \cite{francis2025counting}. 

\subsubsection{Tree-child networks}

We focus here on the word formulation introduced in \cite{pons2021combinatorial}, which relates the counting of tree-child networks to a class of restricted words. 


Let \(w\) be a word over the alphabet \(\{a_i\}_{i=1}^n\), and denote by \(\#(w,a_j)\) the number of occurrences of \(a_j\) in \(w\).

Let \(\cC_{n,k}\) be the class of words over \(\{a_i\}_{i=1}^n\) satisfying:
\begin{itemize}
    \item \(w\) has length \(2n + k\);
    \item exactly \(k\) symbols occur three times in \(w\), and the remaining \(n-k\) occur twice;
    \item for every prefix \(z\) of \(w\) and every \(i\), either \(\#(z, a_i)=0\) or \(\#(z,a_i) \ge \#(z,a_j)\) for all \(j \ge i\).
\end{itemize}

It is worth noting that, for such a word \(w \in \cC_{n,k}\) to satisfy the third condition, it must hold that the subset of letters \(\{a_i\}_{i=1}^{k}\) corresponds to those that appear three times in \(w\), while the remaining \(\{a_j\}_{j=k+1}^n\) appear twice.

\begin{conj}[Conjecture 1 of \cite{pons2021combinatorial}]\label{conj}
    For $n\geq 1$ and $k\geq 0$, the cardinality of $\TC_{n,k}$ of tree-child networks with $n$ leaves and $k$ reticulation
nodes is related to the cardinality of the class of words $\cC_{n-1,k}$ by the equality

$$|\TC_{n,k}| = \frac{n!}{(n-k)!}\cdot |\cC_{n-1,k}|$$

\end{conj}

This correspondence was proven in \cite{pons2021combinatorial} for trees (\(k=0\)) and maximally reticulated networks (\(k=n-1\)), and it was verified computationally for small parameters. The general case was left open in \cite{pons2021combinatorial}; very recently, a preprint has appeared that claims a proof of the conjecture \cite{lin2026proof}.

\subsubsection{Spinal networks}

We now recall the counting framework for spinal tree-child networks from \cite{francis2025counting}. Let $\NLSTC_{n,k}$ denote the class of spinal tree-child networks with unlabeled leaves (denoted $S(R_{n,k})$ in \cite{francis2025counting}). A first key step is the relation between the labeled and unlabeled classes.


\begin{lem}[Lemma 3.1 of \cite{francis2025counting}]\label{lem:francis2025counting}
For $n\geq 1$ and $k\geq 0$, we have
\[
|\STC_{n,k}| = n!\,|\NLSTC_{n,k}| \;-\; \frac{n!}{2}\,|\NLSTC_{n-1,k}|.
\]
\end{lem}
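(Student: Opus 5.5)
We prove the statement by counting, for each unlabeled spinal network, how many distinct labelings of its leaves by $[n]$ it admits. A labeling is an assignment of $[n]$ to the leaves, and two labelings give isomorphic labeled networks exactly when they differ by an automorphism of the unlabeled network. Hence $|\STC_{n,k}| = \sum_{M\in\NLSTC_{n,k}} n!/|\mathrm{Aut}(M)|$. The whole task is therefore to determine $\mathrm{Aut}(M)$ for each $M$. We will show that it has order $1$ or $2$, and that the networks with a nontrivial automorphism are in bijection with $\NLSTC_{n-1,k}$. This gives
\[
|\STC_{n,k}| = n!\,\bigl(|\NLSTC_{n,k}| - A\bigr) + \tfrac{n!}{2}A = n!\,|\NLSTC_{n,k}| - \tfrac{n!}{2}A,
\]
with $A=|\NLSTC_{n-1,k}|$, as required.

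\emph{Step 1: structure of automorphisms.} Let $u$ be the last internal vertex on the spine, i.e.\ the parent of the terminal spine leaf. Internal vertices are totally ordered along the spine, and an automorphism preserves the ancestor relation, so it fixes every internal vertex. For $u$ itself, note that every internal vertex lies on the spine and hence is an ancestor of $u$; since the graph is finite and acyclic, $u$ is the unique internal vertex with no internal descendant, so it is fixed.

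Next we show every leaf other than the children of $u$ is fixed. A leaf is determined by its parent, and each internal vertex other than $u$ has at most one leaf child. For a tree vertex on the spine, one child is the next spine vertex and the other is a leaf or a reticulation. A reticulation on the spine has a single child. So the only possible nontrivial automorphism swaps two leaf children of $u$.

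\emph{Step 2: when the swap exists.} The swap is possible only if $u$ is a tree vertex both of whose children are leaves. This is the sole place where the tree-child condition matters: $u$'s non-spine child could a priori be a reticulation, and then no swap exists. If $u$ is a reticulation, it has only one child, so again nothing can be swapped. When the swap does exist, $\mathrm{Aut}(M)\cong\mathbb{Z}_2$, giving $n!/2$ labelings. Otherwise the automorphism group is trivial, giving $n!$ labelings.

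\emph{Step 3: bijection with $\NLSTC_{n-1,k}$.} Given $M$ with a cherry at $u$, delete one of the two leaves and suppress $u$, which now has indegree and outdegree one. The result has $n-1$ leaves and $k$ reticulations, is still spinal (the spine now ends at the remaining leaf), and is still tree-child. For tree-child, the only vertex whose children change is $u$'s parent $p$, and $p$ gains a leaf child. The inverse map subdivides the arc into the spine's terminal leaf and attaches a new leaf there. This inverse always produces a cherry and preserves both the tree-child and spinal properties. So the two maps are mutually inverse.

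The main obstacle is making Step 1 airtight: we must argue carefully that no automorphism can move spine vertices. This relies on the linear order of internal vertices along the spine, which is fixed because the spine is the unique maximal chain of internal vertices. We also need to handle the degenerate small cases $n=1$ (where we use the convention $|\NLSTC_{0,k}|=0$) and $n=2$ (where the tree with a single cherry at the root arc gives $2!-1=1$).
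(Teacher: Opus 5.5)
Your proof is correct. The paper gives no proof of this lemma: it is imported from \cite{francis2025counting}, so there is no in-paper argument to match, and yours is a complete, self-contained one.

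Your route is the following. You count labelings per unlabeled network as $n!/|\mathrm{Aut}(M)|$. You show $\mathrm{Aut}(M)$ is trivial except for the swap of two leaf children of the last internal vertex. You then put the cherry-ended networks in bijection with $\NLSTC_{n-1,k}$ by deleting a cherry leaf.

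This is consistent with the cherry/non-cherry split the paper uses later in Section~\ref{sec:gf}. Combining $|\STCch_{n,k}|=\tfrac n2|\MT_{n-1,k+1}|$, $|\NLSTCch_{n,k}|=|\NLMT_{n-1,k+1}|$ and $s_{n,k}=n!\,d_{n,k}$ shows that cherry-ended networks carry $n!/2$ labelings and non-cherry ones carry $n!$. Moreover, $|\NLSTCch_{n,k}|=d_{n-1,k+1}=\frac{(n+k-2)!}{2^k k!(n-k-2)!}$ coincides with $|\NLSTC_{n-1,k}|$, exactly as your Step 3 bijection predicts. The advantage of your argument is that it derives the identity from structure alone, with no enumeration.

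A few small points to tighten.

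\textbf{Where tree-child is used.} Your Step 2 remark that tree-child is used there is inaccurate. The off-spine child of $u$ cannot be a reticulation for acyclicity reasons alone: every reticulation is internal, hence on the spine and an ancestor of $u$, so an arc from $u$ to it would close a cycle. Tree-child is genuinely used only in Step 3, where you do check it.

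\textbf{Freeness of the action.} The count $n!/|\mathrm{Aut}(M)|$ requires $\mathrm{Aut}(M)$ to act freely on labelings. Say explicitly that this follows from Step 1: an automorphism fixing all leaves also fixes every internal vertex, so it is the identity.

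\textbf{What Step 1 needs.} Step 1 only uses that the set of internal vertices is a finite chain under the ancestor order. Do not claim the spine is unique; it is not when $M$ ends in a cherry.

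\textbf{Well-definedness in Step 3.} Both maps in Step 3 involve a choice. The forward map chooses which cherry leaf to delete. The inverse map chooses which spine of $M'$ to use when $M'$ itself ends in a cherry, since then its terminal leaf is not unique. Note that both choices are immaterial up to isomorphism because they differ by the leaf-swap automorphism.
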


In \cite{francis2025counting}, the authors construct a bijection between the class of non-labeled spinal tree-child networks $\NLSTC_{n,k}$ and a combinatorial class $\cB_{n-1,k}$. The elements of $\cB_{n,k}$ can be interpreted as partitions of a set of $n+k$ elements into $(n-k)$ singleton blocks and $k$ blocks of size two. The cardinality of this class coincides with the coefficients of the Bessel polynomials and admits the explicit formula
\[
|\cB_{n-1,k}| = \frac{(n-1+k)!}{2^k (n-1-k)! \, k!}.
\]
This correspondence allows one to express the enumeration of non-labeled spinal tree-child networks in purely combinatorial terms. When combined with Lemma~\ref{lem:francis2025counting},  it yields a closed formula for the number of \emph{labeled} spinal tree-child networks, $|\STC_{n,k}|$.

\begin{thm}[Theorem 3.10 of \cite{francis2025counting}]\label{teo:stccount}
Let $\STC_{n,k}$ be the set of binary spinal tree-child networks on $n$ leaves with $k$ reticulations. For $n > 1$ and $k\geq 0$, we have
\[
|\STC_{n,k}| \;=\; n! \, |\cB_{n-1,k}| \;-\; \tfrac{n!}{2}\, |\cB_{n-2,k}|,
\]
where $|\STC_{1,k}| = 1$, and $|\cB_{0,k}| = 0$. Equivalently,

$$|\STC_{n,k}| = \frac{n!(n-2+k)!(n-1+3k)}{2^{k+1}k!(n-1-k)!}.$$
\end{thm}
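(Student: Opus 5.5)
The first formula follows from Lemma~\ref{lem:francis2025counting} once we know that $|\NLSTC_{n,k}| = |\cB_{n-1,k}|$ for all $n\ge 1$, with the convention $|\cB_{0,k}|=0$ when needed. That is the bijection of \cite{francis2025counting} between unlabeled spinal tree-child networks and partitions of an $(n-1+k)$-set into $n-1-k$ singletons and $k$ pairs.

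\textbf{The bijection.} Walk down the spine from the root. Each internal vertex is either a tree vertex or a reticulation.
\begin{itemize}
\item A tree vertex has a side child, which is either a leaf or a reticulation.
\item A reticulation is entered from the spine predecessor and from one earlier tree vertex. Otherwise it would be a side child of a tree vertex, and its other parent also lies on the spine.
\item By tree-childness, no tree vertex has two reticulation children, and a reticulation cannot be immediately followed by another reticulation.
\end{itemize}
There are $n-1+k$ tree vertices on the spine. We encode the network as follows:
\begin{itemize}
\item each tree vertex whose side child is a leaf becomes a singleton block;
\item each tree vertex whose side child is a reticulation is paired with the tree vertex immediately preceding that reticulation on the spine.
\end{itemize}
This yields $k$ pairs and $n-1-k$ singletons. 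The main obstacle is proving that this map is a bijection onto $\cB_{n-1,k}$. One must check that every perfect pairing can be realized while respecting the tree-child constraints and the ordering constraint that the side arc comes from an earlier vertex. I would first try an ordering and insertion argument: read the blocks in order of their larger element and insert the reticulation right after it. Since this is Theorem 3.10 of \cite{francis2025counting}, we may in fact simply cite the bijection.

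\textbf{Closed form.} Substitute $|\cB_{m,k}| = \frac{(m+k)!}{2^k(m-k)!\,k!}$ with $m=n-1$ and $m=n-2$. This gives
\[
n!\frac{(n-1+k)!}{2^k(n-1-k)!\,k!}-\frac{n!}{2}\frac{(n-2+k)!}{2^k(n-2-k)!\,k!}.
\]
Factor out $\frac{n!(n-2+k)!}{2^{k+1}k!(n-1-k)!}$. The first term contributes $2(n-1+k)$ and the second contributes $(n-1-k)$. Their difference is $2n-2+2k-n+1+k = n-1+3k$, which gives the stated formula.

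\textbf{Edge cases.}
\begin{itemize}
\item When $k=n-1$, the second term vanishes since $\cB_{n-2,n-1}=\emptyset$. This is consistent with the formula, because the factor $(n-2-k)!$ is interpreted as infinite, making that term zero.
\item When $k>n-1$, both sides are zero.
\item When $n=1$, we have $|\STC_{1,k}|=1$ directly.
\end{itemize}
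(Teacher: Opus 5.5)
Your argument is essentially the paper's word-encoding route (Propositions~\ref{prop:encod} and~\ref{prop:wordsbessel} combined with Lemma~\ref{lem:francis2025counting}), with the intermediate words collapsed: stripping the suffix and merging the consecutive second and third occurrences of each $a_i$ in the paper's word leaves one position per spine tree vertex, and the resulting partition is exactly yours, with the other parent of each reticulation paired with its spine predecessor; your closed-form algebra is correct. One slip to fix: as written, your singleton rule also captures each spine predecessor of a reticulation (its side child is a leaf), so it must exclude those vertices, as your count $n-1-k$ shows you intended, and once that is fixed the inverse check you flagged as the main obstacle is immediate, since the smaller element of a pair is never directly followed by an inserted reticulation, so tree-childness and acyclicity hold automatically.
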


\section{Spinal networks and word encodings}\label{sec:words}

This section develops word encodings for spinal tree-child networks. We first introduce a restricted word class that encodes unlabeled spinal networks and yields explicit counting formulas. We then relate this encoding to the alternative encoding from \cite{francis2025counting}, and finally extend the word approach to spinal caterpillar tree-child networks.

\subsection{Word encoding and enumeration of spinal networks}\label{subsec:word-encoding}


We start by encoding networks in $\NLSTC_{n,k}$ as words lying in $C_{n,k}$. We define the subclass $C_{n,k}^1\subset C_{n,k}$ as the class of words $w\in C_{n,k}$ such that the following holds:

\begin{itemize}
    \item[1)] $w$ has suffix $a_{k+1}a_{k+2}\ldots a_n$, and
    \item[2)] For every prefix $z$ of $w$, and for every $i\leq k$, if $\#(z,a_i) = 2$, then $za_i$ is also a prefix of $w$.
\end{itemize}

Informally, the subclass $C_{n,k}^1$ contains words in which, for each letter that appears thrice, the last two appearances must be consecutive, and for the letters that appear twice, the second must appear at the end of the word.

Let $w_1,w_2\in C^1_{n,k}$. We define the equivalence relation $w_1\sim w_2$ if and only if there exists a permutation
\[
\sigma\in S_{\{k+1,\dots,n\}}\cong S_{\,n-k}
\]
such that $w_2$ is obtained from $w_1$ by replacing the first appearances of $a_i$ with $a_{\sigma(i)}$ for all $i\in\{k+1,\dots,n\}$ (and leaving the rest of the word unchanged). Equivalently, $w_1\sim w_2$ when $w_2$ is obtained from $w_1$ by permuting the first appearances of the letters $a_{k+1},\dots,a_n$.

Before establishing the bijection between $\NLSTC_{n,k}$ and the word class $C_{n-1,k}^1 / {\sim}$, it is useful to understand the structure of the spine in a non-labeled spinal tree-child network. 
In particular, the number of leaves and reticulations determines both the number of tree vertices and the length of the spine, as the following lemma shows.

\begin{lem}\label{lem:lengthspine}
    
    Let $N \in \NLSTC_{n,k}$, and let $t,\ell$ denote respectively the number of tree nodes and the length of the spine of $N$. Then, $t=n+k-1$ and $\ell = n+2k$.
\end{lem}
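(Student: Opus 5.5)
We will obtain both identities from two independent counts: a degree count, which involves only $n$, $k$ and $t$, and a direct count of the vertices on the spine.

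\textbf{Step 1 (determining $t$).} Let $N\in\NLSTC_{n,k}$ have $t$ tree vertices. We count arcs by out-degree and by in-degree, using only the vertex types in the definition of a binary phylogenetic network. The out-degrees give
\[
|A| = 1 + 2t + k,
\]
since the root has out-degree $1$, each tree vertex $2$, each reticulation $1$, and each leaf $0$. The in-degrees give
\[
|A| = t + 2k + n,
\]
since each tree vertex and each leaf has in-degree $1$, each reticulation $2$, and the root $0$. Equating the two expressions yields $t=n+k-1$. This step uses neither the tree-child property nor the spine; it holds for every binary network with $n$ leaves and $k$ reticulations.

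\textbf{Step 2 (determining $\ell$).} We measure the length of the spine by its number of arcs, which equals its number of vertices minus one. By definition the spine passes through all internal vertices, namely the root, the $t$ tree vertices and the $k$ reticulations, and it ends at exactly one leaf. A directed path is simple, so no vertex is counted twice. Also, no leaf lies in the interior of the path, because leaves have out-degree $0$. Hence the spine has exactly
\[
1+t+k+1 = n+2k+1
\]
vertices, using $t=n+k-1$ from Step 1. It follows that $\ell = n+2k$.

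\textbf{Main point of care.} The only subtle step is fixing the convention for ``length''. If length meant the number of vertices, the value would be $n+2k+1$, so the stated formula $\ell=n+2k$ forces the arc-count convention. We will state this convention explicitly in the proof. The main obstacle is therefore only making sure that the path contains each internal vertex exactly once and exactly one leaf, which follows immediately from acyclicity and the definition of a spine.
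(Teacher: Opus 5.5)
Your proposal is correct and follows the paper's proof essentially verbatim: the paper also counts arcs by out-degree and by in-degree to get $t=n+k-1$, and then takes the spine length to be $t+k+1=n+2k$ arcs. Your explicit choice of the arc-count convention matches the paper's indexing of the spine as $(u_0,\dots,u_\ell)$.
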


\begin{proof}
The number of arcs of $N$ can be counted either by counting outgoing arcs, which gives $1+2t+k$ (corresponding to the root, tree vertices and reticulations), or by counting incoming arcs, which gives $t+2k+n$ (corresponding to tree vertices, reticulations and leaves). Since the two quantities must be equal, we get the identity $t=k+n-1$. Now, the length of the spine equals $t+k+1=n+2k$.
\end{proof}

The following result establishes a correspondence between the sets $\NLSTC_{n,k}$ and $C_{n-1,k}^1 / {\sim}$. The proof follows the approach introduced in \cite{pons2021combinatorial}, encoding each network as a word via a decomposition into suitable components. Specifically, we first decompose the network into \emph{path components}, namely, paths $(v_0, \ldots, v_m)$ such that each internal vertex $v_i$ ($1 \le i < m$) is a tree node and $v_m$ is a leaf. We then order these paths according to their position along the spine and assign labels to the vertices of $N$ from a fixed alphabet. Finally, by reading the labeled paths in this prescribed order, we obtain a word in $C_{n,k}$ that satisfies the additional constraints defining $C_{n,k}^1$.

\begin{prop}\label{prop:encod}
    There is a bijection between $\NLSTC_{n,k}$ and $C_{n-1,k}^1/{\sim}$.
\end{prop}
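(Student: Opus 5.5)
We will construct an explicit map $\Phi\colon \NLSTC_{n,k}\to C^1_{n-1,k}/{\sim}$ by reading the spine from top to bottom, and then describe its inverse. Let $N\in\NLSTC_{n,k}$ and let $v_1,\dots,v_{n+2k-1}$ be the internal vertices of $N$ in spine order. The spine ends at the remaining leaf, so by Lemma~\ref{lem:lengthspine} the spine has $n+2k$ vertices after the root. We first classify the vertices structurally.

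\emph{Reticulations.} Each reticulation $r$ lies on the spine. One of its parents is its spine predecessor $u_r$. The other parent is a tree vertex $s_r$ whose off-spine child is $r$.

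\emph{Predecessors $u_r$.} The vertex $u_r$ cannot be a reticulation, since then it would have only a reticulation child, contradicting tree-child. So $u_r$ is a tree vertex. Its off-spine child cannot be a reticulation either, since both its children would then be reticulations. Hence the off-spine child of $u_r$ is a leaf.

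\emph{Sources $s_r$.} Since $s_r\neq u_r$ and $N$ is acyclic, $s_r$ lies strictly above $u_r$ on the spine.

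\emph{Remaining tree vertices.} Consequently the $t=n+k-1$ tree vertices split into three types: $k$ sources $s_r$, $k$ predecessors $u_r$, and $n-1-k$ further tree vertices whose off-spine child is a leaf. No vertex is both a source and a predecessor, because a predecessor's off-spine child is a leaf.

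We now define the word. Number the reticulations $r_1,\dots,r_k$ in spine order. Write $a_i$ at $s_{r_i}$ and write $a_ia_i$ for the consecutive pair $(u_{r_i},r_i)$. The $n-1-k$ remaining tree vertices receive the letters $a_{k+1},\dots,a_{n-1}$ in order of appearance. Reading the spine gives a prefix of length $n-1+2k$, and we append the suffix $a_{k+1}\cdots a_{n-1}$; this suffix encodes the leaves hanging from these vertices.

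Next we verify that the resulting word lies in $C^1_{n-1,k}$.
\begin{itemize}
\item The length is $2(n-1)+k$, and the multiplicities are $3$ for $a_1,\dots,a_k$ and $2$ for the other letters.
\item Property~1) holds by construction. Property~2) holds because the second and third occurrences of $a_i$ ($i\le k$) form the adjacent pair $a_ia_i$, and the first occurrence (at $s_{r_i}$) precedes it.
\item For the prefix condition, the crucial observation is that the pairs $a_ia_i$ occur in increasing order of $i$. Suppose a prefix has $\#(z,a_j)=3$ and $i<j$ with $a_i$ present. Then the pair of $a_i$ has already been completed, so $\#(z,a_i)=3$. Counts of $2$ are transient and only appear immediately before the third copy. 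Every other comparison involves a letter of count at most $1$ on the right-hand side.
\end{itemize}
Labeling the leaves differently permutes nothing structural, since leaves are unlabeled. However, the choice of naming the twice-occurring letters is exactly absorbed by $\sim$. This shows that $\Phi$ is well defined on isomorphism classes.

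For the inverse, take a class in $C^1_{n-1,k}/{\sim}$ and delete the forced suffix $a_{k+1}\cdots a_{n-1}$. Scan the remaining prefix of length $n+2k-1$ left to right, creating one spine vertex per letter:
\begin{itemize}
\item a first occurrence of $a_i$ becomes a tree vertex, with a leaf hanging off if $i>k$, or with off-spine child to be determined if $i\le k$;
\item each block $a_ia_i$ becomes a tree vertex with a pendant leaf followed by a reticulation whose second parent is the vertex of the first $a_i$.
\end{itemize}
Finally, add a root above the first spine vertex and a leaf below the last one.

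We then check that the output is a spinal tree-child network.
\begin{itemize}
\item Degrees are correct, and acyclicity holds because each first $a_i$ precedes its pair.
\item Tree-child holds: every tree vertex has its spine child or a leaf as a non-reticulation child. Indeed, a tree vertex followed by a reticulation has a leaf as its other child, and a reticulation is never followed by a reticulation, since blocks $a_ia_i$ are separated by the tree vertex $u$.
\end{itemize}
The two constructions are mutually inverse: the decomposition above is canonical, and $\sim$ identifies exactly the words differing in the order of naming of the leaf-bearing letters.

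The main obstacle is the prefix-condition verification together with the choice of indexing of $a_1,\dots,a_k$. Indexing by order of sources would fail the condition whenever reticulations appear in a different order from their sources, so we index by order of reticulations along the spine and check carefully that no other configuration of counts violates it. The remaining steps (counting via Lemma~\ref{lem:lengthspine} and the structural classification of tree vertices) are routine.
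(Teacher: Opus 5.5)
Your proof is correct and follows essentially the paper's construction. Reading labels along the paper's path components $P_0,\dots,P_{n-1}$ gives exactly your spine-reading word, since the pendant leaves inside the non-trivial components are unlabeled: the source, the spine predecessor and the reticulation $r_i$ all carry $a_i$, the exterior-leaf letters form the suffix, and the inverse is the same reconstruction.

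You go slightly further than the paper by checking the prefix condition of $C_{n-1,k}$, with two small caveats. First, a count of $2$ for $a_j$ with $j>k$ is not transient: it occurs in the suffix, where the check is immediate because the suffix is increasing and every $a_i$ with $i\le k$ already has count $3$. Second, recovering the original word from the reconstructed network tacitly uses the converse fact that the prefix condition forces the blocks $a_ia_i$ to appear in increasing order of $i$.
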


\begin{proof}
    We first show that every network in $\NLSTC_{n,k}$ can be encoded as a word in $C_{n-1,k}^1$, and then we verify that the resulting encoding is well-defined on the quotient $C_{n-1,k}^1 / {\sim}$. Moreover, we show that any such word corresponds to a network in $\NLSTC_{n,k}$, establishing the desired bijection.

    The encoding procedure is depicted in Figure~\ref{fig:uldecomp} and is composed of different steps, which we now describe in detail.

    \begin{itemize}

    \item \textbf{Step 1.} Let $N \in \NLSTC_{n,k}$, and let $s = (u_0, u_1, \ldots, u_\ell)$ denote the spine of $N$. Since $s$ contains all internal vertices of the network, it in particular includes all $k$ reticulation nodes. Denote the reticulations by $u_{\ell_i}$, for $i=1,\dots,k$, ordered by their appearance along the spine. For each reticulation $u_{\ell_i}$, consider the (tree) node that precedes it in the spine, that is, $u_{\ell_i - 1}$; this node must have a child, which cannot be on the spine (in that case, this child would have two different parents, contradicting the tree-child condition). Hence, this child must be a leaf, which we denote by $v_{\ell_i}$. Notice that not all the leaves of $N$ are of this form. Indeed, there are $n-1$ leaves (excluding the leaf at the end of the spine), and we have identified $k$ of them as the children of the specified parents of reticulations, and hence there are $n-k-1$ remaining leaves. We denote these remaining leaves by $w_1,\dots,w_{n-k-1}$ (where the ordering of these nodes is arbitrary) and call them exterior leaves. See panel (b) of Figure~\ref{fig:uldecomp} for an illustration of this labeling of the vertices of $N$.

    \item \textbf{Step 2.}
    Based on the definitions given in the previous step, we define the \emph{path components} $P_0,\dots,P_{n-1}$ of $N$ as follows:
    \[
    \begin{aligned}
    P_0 &:= (u_0,u_1,\dots,u_{\ell_1-1},v_{\ell_1}),\\
    P_i &:= (u_{\ell_i},u_{\ell_i+1},\dots,u_{\ell_{i+1}-1},v_{\ell_{i+1}}) &&\text{for }1\le i\le k-1,\\
    P_k &:= (u_{\ell_k},u_{\ell_k+1},\dots,u_l)\\
    P_{k+j} & := (w_j) &&\text{for }1\le j\le n-k-1,
    \end{aligned}
    \]
    Note that all these paths are vertex-disjoint, and they cover all vertices of $N$. Also, all the arcs in the paths belong to the original network, but there are arcs in $N$ that do not belong to any of the paths. Namely, the only arcs that do not belong to any path component are the reticulation arcs and the arcs that lead to the leaves $w_1,\dots,w_{n-k-1}$.
    See panel (c) of Figure~\ref{fig:uldecomp} for an illustration of the path components of a network.

    \item \textbf{Step 3.} We now assign labels from the alphabet $\{a_1, \dots, a_{n-1}\}$ to some of the vertices of $N$. We emphasize that the labeling is not injective (different nodes will get the same label) and not all vertices of $N$ will be labeled. The labeling is defined as follows:
    \begin{itemize}
        \item For each $i=1,\dots,k$, the reticulation vertex $u_{\ell_i}$, and both of its parents are assigned the label $a_i$.
        \item For each $j=1, \ldots, n-k-1$, the leaf $w_j$  and its single parent (which is a tree node), are assigned the label $a_{k+j}$.
        \item All other nodes are left unlabeled.
    \end{itemize}
    It is straightforward to verify that this labeling is well-defined, since each tree node is either the parent of a single reticulation, or the parent of a leaf, but not both simultaneously, and hence it will receive exactly one label. Also, each reticulation receives exactly one label, and each leaf receives at most one label.
    Note also that the only nodes that do not receive a label are the leaves belonging to the non-trivial path components $P_0,\dots,P_{k-1}$, and the root of the network.
    This labeling is illustrated in panel (d) of Figure~\ref{fig:uldecomp}.
\end{itemize}

    Now, we associate to each path component $P_i$ the word $\tilde w_i$ obtained by reading the labels of the (labeled) vertices in $P_i$, in the order that they appear in the path. Finally, we concatenate these words in order to obtain a word $w=\tilde w_0\tilde w_1\cdots \tilde w_{n-1}$. 
    
    We verify that $w$ belongs to $C_{n-1,k}^1$. From the definition of the labeling it is clear that the letters $a_1,\dots,a_k$ appear three times in $w$, while the remaining letters $a_{k+1},\dots,a_{n-1}$ appear twice. 
    Also, it is clear that $a_{k+1}a_{k+2}\cdots a_{n-1}$ is a suffix of $w$, since the last $n-k-1$ path components consist of the leaves $w_1,\dots,w_{n-k-1}$, which are labeled with these letters. 
    Finally, for each $i\le k$, if $\#(z,a_i)=2$ for some prefix $z$ of $w$, then the last appearance of $a_i$ corresponds to the upper parent of the reticulation $u_{\ell_i}$ (with respect to the spine order), and these two vertices are adjacent in the spine, so they belong to consecutive path components, and since the leaf child of $u_{\ell_i-1}$ is not labeled, the prefix $z$ must be followed by the labeled assigned to $u_{\ell_i}$. Thus, the last two appearances of $a_i$ are consecutive in $w$. Therefore, $w\in C_{n-1,k}^1$.

    Since the labeling of the leaves $w_1,\dots,w_{n-k-1}$ is arbitrary, the resulting word $w$ is well-defined only up to the equivalence relation ${\sim}$. Hence, we have shown that every network in $\NLSTC_{n,k}$ can be encoded as a word in $C_{n-1,k}^1 / {\sim}$.

    Conversely, given a word $w\in C_{n-1,k}^1$, we can reverse the previous construction to obtain a network in $\NLSTC_{n,k}$ that encodes $w$. 
    
    \begin{itemize}
        \item \textbf{Step 1.} The last $n-k-1$ letters of $w$ are $a_{k+1}a_{k+2}\cdots a_{n-1}$, and they correspond to the subwords $\tilde w_{k+1},\dots,\tilde w_{n-1}$, which consist of a single letter each. As for the first $k$ subwords $\tilde w_0,\dots,\tilde w_{k-1}$, they can be determined by splitting $w$ between the second and third appearances of the letters $a_1,\dots,a_k$, which are consecutive in $w$ by the definition of $C_{n-1,k}^1$.
        \item \textbf{Step 2.} We can trivially reconstruct the path components $P_0,\dots,P_{n-1}$ from these subwords, labeling their nodes according to the letters of the subwords $\tilde w_0,\dots,\tilde w_{n-1}$. 
        \item \textbf{Step 3.} Finally, we can reconstruct the network by connecting each internal vertex in a path, say it is labeled by $a_i$, with the single first node in a path that shares the same label $a_i$.
    \end{itemize}

    It is straightforward to verify that the resulting network is a spinal tree-child network, and that it is encoded by the original word $w$. Hence, we have shown that every word in $C_{n-1,k}^1$ corresponds to a network in $\NLSTC_{n,k}$.
\end{proof}

We illustrate in Figure~\ref{fig:uldecomp} the encoding of a non-labeled spinal tree-child network, with \(n = 5\) leaves and \(k = 2\) reticulations.
Panel~(a) shows the unlabeled network. In panel~(b), the internal vertices along the spine are denoted according to their roles in the proof, and in panel~(c) the network is decomposed into the corresponding path components \(P_0, \ldots, P_4\).  
In panel~(d), we assign the alphabet labels as prescribed in the proof, so we can read the vertices following the path component order, obtaining the 
subwords \(a_3a_1a_2a_1\), \(a_1a_2\), \(a_2a_4\), \(a_3\), \(a_4\) corresponding to the path components
and finally the
word  
\[
w = a_3a_1a_2a_1a_1a_2a_2a_4a_3a_4 \in C_{4,2}^1.
\]
Exchanging the singleton path components \(P_3\) and \(P_4\) leads to an alternative encoding, yielding  
\[
w' = a_4a_1a_2a_1a_1a_2a_2a_3a_3a_4 \in C_{4,2}^1,
\]
which satisfies \(w \sim w'\).  
This illustrates how networks differing only by the labeling of the exterior leaves correspond to equivalent words under the equivalence relation \(\sim\).
Conversely, if we are given the word \(w\) as above, by counting the number of different letters and their multiplicities, we find that it must belong to \(C_{4,2}^1\); hence the last two subwords must be the single letters \(a_3\) and \(a_4\). The remaining subwords are obtained by splitting the remaining letters between the second and third occurrences of $a_1$ and $a_2$. Hence, the full sequence of subwords is \(a_3a_1a_2a_1\), \(a_1a_2\), \(a_2a_4\), \(a_3\), \(a_4\) and from these we can reconstruct the path components. Finally, by adding the arcs from the first and second occurrences of $a_1$ and $a_2$, to the third one, and from the first one of $a_3$ and $a_4$ to the second one, we reconstruct the original network.

\begin{figure}[h]
\centering

\begin{tikzpicture}[scale=0.65]

\node[mi] (a) at    (-5+0,0)        {};
\node[mi] (b) at    (-5+0,-1)       {};
\node[mi] (b1) at   (-5+1,-2)       {};
\node[mi] (c) at    (-5+0,-2)       {};
\node[mi] (d) at    (-5+0,-3)       {};
\node[mi] (e) at    (-5+0,-4)       {};
\node[mi] (e1) at   (-5+-1,-5)      {};
\node[bxl] (f) at   (-5+0,-5)       {};
\node[mi] (g) at    (-5+0,-6)       {};
\node[mi] (g1) at   (-5+1,-7)       {};
\node[bxl] (h) at   (-5+0,-7)       {};
\node[mi] (i) at    (-5+0,-8)       {};
\node[mi] (j) at    (-5+0.75,-9)    {};
\node[mi] (j1) at   (-5+-0.75,-9)   {};

\node[label] at (-5,-10.5) {\textbf{(a)}};
\node[label] at (0,-10.5) {\textbf{(b)}};
\node[label] at (5,-10.5) {\textbf{(c)}};
\node[label] at (10,-10.5) {\textbf{(d)}};

\draw (a) -- (b) -- (c) -- (d) -- (e) -- (f) -- (g) -- (h) -- (i) -- (j);
\draw (i) -- (j1);
\draw (b) -- (b1);
\draw (e) -- (e1);
\draw (g) -- (g1);
\draw (c) to[bend left] (f);
\draw (d) to[bend right] (h);
\node[mi] (a) at (0,0) {\scriptsize $u_0$};
\node[mi] (b) at (0,-1) {\scriptsize $u_1$};
\node[mi] (b1) at (1,-2) {\scriptsize $w_1$};
\node[mi] (c) at (0,-2) {\scriptsize $u_2$};
\node[mi] (d) at (0,-3) {\scriptsize $u_3$};
\node[mi] (e) at (0,-4) {\scriptsize $u_4$};
\node[mi] (e1) at (-1,-5) {\scriptsize $v_5$};
\node[bxl] (f) at (0,-5) {\scriptsize $u_5$};
\node[mi] (g) at (0,-6) {\scriptsize $u_6$};
\node[mi] (g1) at (1,-7) {\scriptsize $v_7$};
\node[bxl] (h) at (0,-7) {\scriptsize $u_7$};
\node[mi] (i) at (0,-8) {\scriptsize $u_8$};
\node[mi] (j) at (0.75,-9) {\scriptsize $w_2$};
\node[mi] (j1) at (-0.75,-9) {\scriptsize $u_9$};

\node[label] at (0,-10.5) {\textbf{(b)}};
\node[label] at (5,-10.5) {\textbf{(c)}};
\node[label] at (10,-10.5) {\textbf{(d)}};

\node[label] at (5.8,0.5) {$P_0$};
\node[label] at (5.9,-5.5) {$P_1$};
\node[label] at (3.9,-8) {$P_2$};
\node[label] at (6.45,-1.5) {$P_3$};
\node[label] at (6.4,-8.6) {$P_4$};

\node[label] at (5+5.8,0.5) {$P_0$};
\node[label] at (5+5.9,-5.5) {$P_1$};
\node[label] at (5+3.9,-8) {$P_2$};
\node[label] at (5+6.45,-1.5) {$P_3$};
\node[label] at (5+6.4,-8.6) {$P_4$};

\draw (a) -- (b) -- (c) -- (d) -- (e) -- (f) -- (g) -- (h) -- (i) -- (j);
\draw (i) -- (j1);
\draw (b) -- (b1);
\draw (e) -- (e1);
\draw (g) -- (g1);
\draw (c) to[bend left] (f);
\draw (d) to[bend right] (h);

\node[mi] (a) at    (5+0,0) {\scriptsize $u_0$};
\node[mi] (b) at    (5+0,-1) {\scriptsize $u_1$};
\node[mi] (b1) at   (5+1,-2) {\scriptsize $w_1$};
\node[mi] (c) at    (5+0,-2) {\scriptsize $u_2$};
\node[mi] (d) at    (5+0,-3) {\scriptsize $u_3$};
\node[mi] (e) at    (5+0,-4) {\scriptsize $u_4$};
\node[mi] (e1) at   (5+-1,-5) {\scriptsize $v_5$};
\node[bxl] (f) at   (5+0,-5) {\scriptsize $u_5$};
\node[mi] (g) at    (5+0,-6) {\scriptsize $u_6$};
\node[mi] (g1) at   (5+1,-7) {\scriptsize $v_7$};
\node[bxl] (h) at   (5+0,-7) {\scriptsize $u_7$};
\node[mi] (i) at    (5+0,-8) {\scriptsize $u_8$};
\node[mi] (j) at    (5+0.75,-9) {\scriptsize $w_2$};
\node[mi] (j1) at   (5+-0.75,-9) {\scriptsize $u_9$};

\draw (a) -- (b) -- (c) -- (d) -- (e) -- (f) -- (g) -- (h) -- (i) -- (j);
\draw (i) -- (j1);
\draw (b) -- (b1);
\draw (e) -- (e1);
\draw (g) -- (g1);
\draw (c) to[bend left] (f);
\draw (d) to[bend right] (h);


\draw [red, ultra thick] plot [smooth cycle] coordinates {(5+0,0.5) (5+0.45,-0.1) (5+0.5,-3.7) (5+0.1,-4.4) (5+-1,-5.4) (5+-1.4,-4.9)(5+-0.5,-4) (5+-0.45,-0.1)};
\draw [blue, ultra thick] plot [smooth cycle] coordinates {(5+0,-4.6)(5+0.45,-5)(5.5,-6)(6.3,-6.8)(6.05,-7.45)(4.6,-6)(4.55,-5)};
\draw [green, ultra thick] plot [smooth cycle] coordinates {(5,-6.6)(5.45,-7)(5.5,-8)(4.25,-9.5)(3.75,-9)(4.5,-8)(4.55,-7)};
\draw[orange, ultra thick] (b1) circle (0.4cm);
\draw[purple, ultra thick] (j) circle (0.4cm);

\node[mi] (a) at    (10+0,0) {\scriptsize };
\node[mi] (b) at    (10+0,-1) {\scriptsize $a_3$};
\node[mi] (b1) at   (10+1,-2) {\scriptsize $a_3$};
\node[mi] (c) at    (10+0,-2) {\scriptsize $a_1$};
\node[mi] (d) at    (10+0,-3) {\scriptsize $a_2$};
\node[mi] (e) at    (10+0,-4) {\scriptsize $a_1$};
\node[mi] (e1) at   (10+-1,-5) {};
\node[bxl] (f) at   (10+0,-5) {\scriptsize $a_1$};
\node[mi] (g) at    (10+0,-6) {\scriptsize $a_2$};
\node[mi] (g1) at   (10+1,-7) {};
\node[bxl] (h) at   (10+0,-7) {\scriptsize $a_2$};
\node[mi] (i) at    (10+0,-8) {\scriptsize $a_4$};
\node[mi] (j) at    (10+0.75,-9) {\scriptsize $a_4$};
\node[mi] (j1) at   (10+-0.75,-9) {\scriptsize };

\draw (a) -- (b) -- (c) -- (d) -- (e) -- (f) -- (g) -- (h) -- (i) -- (j);
\draw (i) -- (j1);
\draw (b) -- (b1);
\draw (e) -- (e1);
\draw (g) -- (g1);
\draw (c) to[bend left] (f);
\draw (d) to[bend right] (h);


\draw [red, ultra thick] plot [smooth cycle] coordinates {(5+5+0,0.5) (5+5+0.45,-0.1) (5+5+0.5,-3.7) (5+5+0.1,-4.5) (5+5+-1,-5.3) (5+5+-1.3,-5)(5+5+-0.5,-4) (5+5+-0.45,-0.1)};
\draw [blue, ultra thick] plot [smooth cycle] coordinates {(5+5+0,-4.6)(5+5+0.45,-5)(5+5.5,-6)(5+6.3,-7)(5+6,-7.3)(5+4.6,-6)(5+4.55,-5)};
\draw [green, ultra thick] plot [smooth cycle] coordinates {(5+5,-6.6)(5+5.45,-7)(5+5.5,-8)(5+4.25,-9.5)(5+3.75,-9)(5+4.5,-8)(5+4.55,-7)};
\draw[orange, ultra thick] (b1) circle (0.4cm);
\draw[purple, ultra thick] (j) circle (0.4cm);

\end{tikzpicture}
\caption{\small 
Example of the encoding process described in the proof of Proposition \ref{prop:encod} for a spinal tree-child network with \(n=5\) and \(k=2\): 
\textbf{(a)} unlabeled network, 
\textbf{(b)} identification of nodes, 
\textbf{(c)} decomposition into path components, and 
\textbf{(d)} partial labeling of the nodes over the alphabet $\{a_1,a_2,a_3,a_4\}$.
}
\label{fig:uldecomp}

\end{figure}

\begin{prop} \label{prop:wordsbessel}
For \(n\geq 1\) and \(k\leq n\) we have
\[
|C_{n,k}^1/{\sim}| \;=\; \frac{(n+k)!}{2^k (n-k)!\, k!}.
\]
\end{prop}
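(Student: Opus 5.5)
The plan is to count $C_{n,k}^1$ directly and then divide by the size of the equivalence classes. First, every class of $\sim$ has exactly $(n-k)!$ elements. The first appearances of $a_{k+1},\dots,a_n$ occupy $n-k$ positions in $w$. Their second appearances form the fixed suffix $a_{k+1}\cdots a_n$, so every letter $a_j$ with $j>k$ has its first occurrence strictly before the suffix. Permuting these first occurrences by $\sigma$ therefore produces $(n-k)!$ distinct words, and all of them have the same multiplicities and the same suffix.

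What must be checked is that the permuted word still lies in $C_{n,k}^1$. This prefix condition is the main obstacle. For $i<j$, the condition requires that once $a_i$ has appeared, its count dominates the count of $a_j$. I would try to show that, given conditions 1) and 2), the prefix condition amounts to a precise statement:
\begin{itemize}
\item the first occurrences of all letters appear in increasing order of index;
\item for $i<j\le k$, whenever $a_j$ has reached count $2$ then $a_i$ has reached count $2$, and likewise for count $3$.
\end{itemize}
With this reformulation the letters $a_{k+1},\dots,a_n$ are forced to make their first appearances in the order $a_{k+1},\dots,a_n$. Consequently the only freedom within a class is which positions these first occurrences take. This shows $\sim$ acts freely and that $|C_{n,k}^1/{\sim}|$ equals the number of words in which those $n-k$ positions are merely marked by an indistinguishable symbol $b$.

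Now count these reduced words. Delete the suffix. What remains is a word of length $n+2k$ containing the letters $a_1,\dots,a_k$ three times each, with the last two copies adjacent, together with $n-k$ copies of $b$. Merge each adjacent pair $a_ia_i$ into a single block $A_i$. The word becomes a word of length $n+k$ in which each $i\le k$ contributes one single occurrence $a_i$ followed later by its block $A_i$, and the remaining $n-k$ positions are $b$'s.

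The prefix conditions then say two things. First, the first occurrences of the $a_i$ appear in order $a_1,\dots,a_k$. Second, the blocks are ordered consistently: the third occurrence of $a_j$ cannot precede that of $a_i$ for $i<j$, and more generally the counts of $a_i$ must dominate those of $a_j$. There is also an interaction between $b$'s and the first occurrences of the $a_i$, $i\le k$: the first occurrence of $a_i$ must precede any $b$ whose letter index is larger.

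So the reduced words correspond to choosing $k$ ordered pairs (first copy, block) among $n+k$ slots, with the $b$'s filling the rest. I would verify that the constraints leave exactly one admissible labelling for each unordered matching. This would give
\[
\binom{n+k}{2k}(2k-1)!!=\frac{(n+k)!}{(n-k)!\,(2k)!}\cdot\frac{(2k)!}{2^k k!}=\frac{(n+k)!}{2^k(n-k)!\,k!},
\]
which is $|\cB_{n,k}|$, as claimed.

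The delicate part is the matching step. One must confirm that the domination condition, applied both to counts $1\to2$ (the first copy versus the block) and to the relative order of different $i$, cuts the labelled configurations down to exactly the perfect matchings on $2k$ chosen slots. If it turns out to be stricter, I would instead set up a recursion by removing the last block $A_i$ and check it against the Bessel recurrence
\[
|\cB_{n,k}|=|\cB_{n-1,k}|+(n+k-1)\,|\cB_{n-1,k-1}|.
\]
As a fallback, the statement combined with Proposition~\ref{prop:encod} is equivalent to the bijection $\NLSTC_{n+1,k}\leftrightarrow\cB_{n,k}$ of \cite{francis2025counting}. So one can verify the count by comparing the network decomposition directly with that bijection.
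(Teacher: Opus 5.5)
\textbf{There is a genuine gap.} Your skeleton is the paper's: strip the suffix, compress each adjacent pair $a_ia_i$ into a block $A_i$, choose $2k$ of the $n+k$ slots, and count matchings. But your characterization of the prefix condition is wrong, and the step you defer (``exactly one admissible labelling for each unordered matching'') fails under the constraints you state.

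Two of your claims are false. First occurrences need not appear in increasing order of index. Also, the $b$'s are not tied to the first occurrences of $a_1,\dots,a_k$. For instance, $a_2a_1a_1a_1a_2a_2a_3a_3\in C^1_{3,2}$ exhibits the nested pattern $a_2\,a_1\,A_1\,A_2$. Likewise, $a_2a_1a_1a_1a_2\in C^1_{2,1}$ has a $b$ before the first $a_1$.

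If your constraints held, only non-nesting matchings would survive, and every first copy would have to precede every $b$. Then $|C^1_{2,1}/{\sim}|$ and $|C^1_{2,2}/{\sim}|$ would both come out as $2$ instead of $3$. Your reformulation also contradicts your own first paragraph. If $a_{k+1},\dots,a_n$ had to first appear in increasing order, every $\sim$-class would be a singleton rather than of size $(n-k)!$, and permuted words would leave $C^1_{n,k}$.

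\textbf{The missing observation is that the prefix condition is much weaker than you assume.}

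\emph{Letters $a_j$ with $j>k$ are unconstrained.} Before the suffix each has count at most $1$, while any letter already present has count at least $1$. Inside the suffix every $a_i$ with $i\le k$ has count $3$, and the suffix itself is increasing. So the first copies of $a_{k+1},\dots,a_n$ can be placed and permuted freely, and this is what actually justifies replacing them by indistinguishable $b$'s.

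\emph{For $i<j\le k$, only the blocks are constrained.} Write $f,g$ for the compressed positions of the first copy and of the block. When $a_i$ first appears one needs $\#a_j\le 1$, i.e.\ $f_i<g_j$. At the second copy of $a_j$ one needs $\#a_i\ge 2$. Since the last two copies of $a_i$ are adjacent, this means $g_i<g_j$. Because $f_i<g_i$, the whole condition reduces to $g_i<g_j$.

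Hence only the blocks must occur in the order $A_1,\dots,A_k$. Each perfect matching of the $2k$ chosen slots then receives exactly one labelling: order the pairs by their larger element. This is precisely the rule used in the paper's proof, and it gives $\binom{n+k}{2k}(2k-1)!!$.

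Your fallbacks do not close the gap. The recursion is never carried out. Invoking Proposition~\ref{prop:encod} together with the count of \cite{francis2025counting} only imports the result rather than proving it.
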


\begin{proof}
Let \(w \in C_{n,k}^1\). Recall that the letters \(a_1, \ldots, a_k\) appear three times in \(w\), with the last two occurrences of each letter consecutive, while the letters \(a_{k+1}, \ldots, a_n\) appear twice and their second occurrences form a fixed suffix. 

Removing this fixed suffix leaves a word of length \(n + 2k\), containing three occurrences of each \(a_i\) for \(1 \le i \le k\) and one occurrence of each \(a_j\) for \(k+1 \le j \le n\).  
Now, for each \(a_i\) with \(i \le k\), the last two consecutive occurrences can be viewed as a single block. Compressing these consecutive pairs into single blocks reduces the length by \(k\), yielding a new sequence \(w^\ast\) of length \(n + k\).  
Each compressed block represents a paired structure corresponding to the \(a_i\) with \(i \le k\), and each of the remaining \(n-k\) positions corresponds to a singleton letter among \(a_{k+1}, \ldots, a_n\).  

Hence, \(w^\ast\) determines a partition of the set of positions \(\{1, 2, \ldots, n+k\}\) into \(k\) unordered pairs, corresponding to the first occurrences and their associated blocks of each \(a_i\), and \(n-k\) singletons, corresponding to the remaining letters.  
Conversely, given any such partition, we can reconstruct a unique class in \(C_{n,k}^1/{\sim}\). Each pair is assigned to one of the letters \(a_1, \ldots, a_k\) according to the order of their maximal elements: the pair whose maximal element is smallest is associated with \(a_1\), the next with \(a_2\), and so on.  
Once this correspondence is fixed, the singletons are filled with the remaining letters \(a_{k+1}, \ldots, a_n\), and the fixed suffix \(a_{k+1}\ldots a_n\) is reattached.  
This establishes a bijection between \(C_{n,k}^1/{\sim}\) and the set \(\cB_{n,k}\) of partitions of an \((n+k)\)-element set into \(k\) unordered pairs and \(n-k\) singletons.

The number of such partitions is well known. One first chooses the \(2k\) elements that will form the pairs, in \(\binom{n+k}{2k}\) ways, and then divides these \(2k\) elements into \(k\) unordered pairs in \({(2k)!}/({2^k k!})\) ways. The remaining \(n-k\) elements are singletons. Therefore,
\[
|\cB_{n,k}| = \binom{n+k}{2k}\frac{(2k)!}{2^k k!}
= \frac{(n+k)!}{2^k (n-k)! \, k!}.
\]
Since \(|C_{n,k}^1/{\sim}| = |\cB_{n,k}|\), the result follows.
\end{proof}

By combining Propositions \ref{prop:encod} and \ref{prop:wordsbessel}, we obtain an explicit enumeration formula for the class \( \NLSTC_{n,k} \).  
In turn, when this result is combined with Lemma \ref{lem:francis2025counting}, it yields Theorem \ref{teo:stccount}.

\begin{coro}
    For $n\geq 1$ and $k\leq n$ we have

    $$|\NLSTC_{n,k}| = |C_{n-1,k}^1/{\sim}| = \frac{(n-1+k)!}{2^k(n-1-k)!k!}.$$
\end{coro}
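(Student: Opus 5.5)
This corollary should follow by chaining the two preceding propositions. Proposition~\ref{prop:encod} gives a bijection between $\NLSTC_{n,k}$ and $C_{n-1,k}^1/{\sim}$, so the first equality $|\NLSTC_{n,k}| = |C_{n-1,k}^1/{\sim}|$ is immediate. For the second equality, I will apply Proposition~\ref{prop:wordsbessel} with $n$ replaced by $n-1$. This gives
\[
|C_{n-1,k}^1/{\sim}| = \frac{(n-1+k)!}{2^k\,(n-1-k)!\,k!},
\]
which is exactly the claimed closed form.

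The only point that needs care is the range of parameters, and I expect this to be the main obstacle, although a minor one. Proposition~\ref{prop:wordsbessel} is stated for $n\ge 1$ and $k\le n$. After the shift $n\mapsto n-1$, it therefore covers $n\ge 2$ and $k\le n-1$. I will handle the remaining cases as follows.

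First, when $k = n$ (or more generally $k\ge n$), the factorial $(n-1-k)!$ has a negative argument. I will adopt the standard convention that $1/(-m)! = 0$ for $m\ge 1$, so the formula gives $0$. On the network side, $\NLSTC_{n,k}$ is empty when $k\ge n$: by the proof of Proposition~\ref{prop:encod}, the $k$ leaves $v_{\ell_i}$ are distinct and exclude the spine-end leaf, so $k\le n-1$. The word class is empty for the same reason, since $C_{n-1,k}$ requires $k\le n-1$ letters occurring three times.

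Second, when $n=1$, the only admissible value is $k=0$. The single network is the root joined to one leaf, and the formula gives $0!/(2^0\,0!\,0!) = 1$. On the word side, $C^1_{0,0}$ contains only the empty word, so its quotient has exactly one class.

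With these boundary cases verified directly, the corollary follows by composing the bijection of Proposition~\ref{prop:encod} with the count in Proposition~\ref{prop:wordsbessel}. No further argument is needed, since both propositions are already established.
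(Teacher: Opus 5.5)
Your proof is correct and follows the paper's argument: the paper also obtains the corollary by composing the bijection of Proposition~\ref{prop:encod} with the count of Proposition~\ref{prop:wordsbessel} applied at $n-1$. Your separate handling of the boundary cases is a sound refinement that the paper leaves implicit. These are $n=1$, which falls outside the range of Proposition~\ref{prop:wordsbessel} after the shift, and $k=n$, where $(n-1-k)!$ is undefined and both sides must be read as $0$.
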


\subsection{Alternative word encoding for unlabeled spinal tree-child networks}

Let \( N \) be an unlabeled spinal tree-child network, and let \( s = (u_0, u_1, \ldots, u_\ell) \) denote its spine.  
It is worth noting that the encoding of \( N \) into an element of \( C_{n,k}^1 / {\sim} \) follows rules closely related to the characterization of internal vertices described in Section 3.2 of \cite{francis2025counting}.  
In that work, each internal vertex \( v \in s \setminus \{u_0, u_\ell\} \) of an unlabeled spinal tree-child network is labeled  according to the following rules:
\begin{enumerate}
    \item If \( v \) has a leaf child that does not lie on the spine, then \( v \) is labeled by \( L \).
    \item If \( v \) is the \( i \)-th reticulation vertex along the spine, counted from the terminal leaf \( u_\ell \), then \( v \) is labeled by \( R_i \).
    \item If \( v \) is the parent of the \( i \)-th reticulation vertex that is not its immediate ancestor on the spine, then \( v \) is labeled by \( Q_i \).
\end{enumerate}

By reading the labels of the internal vertices along the spine from bottom to top, each network can be encoded by a word over the alphabet \[\{L, R_1, \ldots, R_k, Q_1, \ldots, Q_k\}.\]  
We refer to this representation as the \(\{L, R, Q\}\)-encoding.  
A word in this encoding contains \(n - 1\) occurrences of the letter \(L\) and exactly one occurrence of each of the remaining symbols.

An example of the \(\{L, R, Q\}\)-encoding is shown in Figure~\ref{fig:uldecomp2}, which depicts the same network as in Figure~\ref{fig:uldecomp}, but with its internal vertices labeled according to the rules of this scheme.  
Once the vertices of \(N\) are labeled, the corresponding word is obtained by reading the labels along the spine from bottom to top, which gives \(w = LR_1LR_2LQ_1Q_2L\).

In \cite{francis2025counting}, the same \(\{L, R, Q\}\)-encoding is interpreted in terms of certain set covers, where each internal node corresponds to a subset that captures its local structure along the spine.

To better understand the relationship between both classes of word encodings, we define a transformation that converts one type of word into the other.  
Let \(w\) be the word obtained by applying the \(\{L, R, Q\}\)-encoding over an unlabeled spinal tree-child network. We define a transformation \(T\) that converts $w$ into a word lying in \(C_{n-1,k}^1/{\sim}\) as follows:

\begin{itemize}
    \item[1)] Replace every \(R_i\), every \(Q_i\), and every \(L\) that is immediately consecutive to some \(R_i\), by \(a_{k+1-i}\).
    \item[2)] For the remaining \(n-k-1\) letters \(L\), replace them, from right to left, by \(a_{k+1}, a_{k+2}, \ldots, a_{n-1}\), respectively.
    \item[3)] Reverse  the resulting word.
    \item[4)] Append the suffix \(a_{k+1}a_{k+2}\ldots a_{n-1}\) to the end of the reversed word.
\end{itemize}

The word obtained after Step~(4) is denoted by \(T(w)\). One can check that \(T(w)\in C_{n-1,k}^1/{\sim}\).

To illustrate the transformation, consider the network shown in Figure~\ref{fig:uldecomp2}, whose \(\{L, R, Q\}\)-encoding is 
\[
w = LR_1LR_2LQ_1Q_2L.
\]
Applying the four transformation steps to \(w\), we obtain:
\[
\begin{aligned}
w_1 &= La_2a_2a_1a_1a_2a_1L,\\
w_2 &= a_4a_2a_2a_1a_1a_2a_1a_3,\\
w_3 &= a_3a_1a_2a_1a_1a_2a_2a_4,\\
T(w) &= a_3a_1a_2a_1a_1a_2a_2a_4a_3a_4.
\end{aligned}
\]
Hence, the word associated with the network in Figure~\ref{fig:uldecomp2} is \[T(w) = a_3a_1a_2a_1a_1a_2a_2a_4a_3a_4.\]

\begin{figure}[h]
\centering

\begin{tikzpicture}[scale=0.65]
 
\node[mi] (a) at    (5+0,0) {};
\node[mi] (b) at    (5+0,-1) {\scriptsize $L$};
\node[mi] (b1) at   (5+1,-2) {};
\node[mi] (c) at    (5+0,-2) {\scriptsize $Q_2$};
\node[mi] (d) at    (5+0,-3) {\scriptsize $Q_1$};
\node[mi] (e) at    (5+0,-4) {\scriptsize $L$};
\node[mi] (e1) at   (5+-1,-5) {};
\node[bxl] (f) at   (5+0,-5) {\scriptsize $R_2$};
\node[mi] (g) at    (5+0,-6) {\scriptsize $L$};
\node[mi] (g1) at   (5+1,-7) {};
\node[bxl] (h) at   (5+0,-7) {\scriptsize $R_1$};
\node[mi] (i) at    (5+0,-8) {\scriptsize $L$};
\node[mi] (j) at    (5+0.75,-9) {};
\node[mi] (j1) at   (5+-0.75,-9) {};

\draw (a) -- (b) -- (c) -- (d) -- (e) -- (f) -- (g) -- (h) -- (i) -- (j);
\draw (i) -- (j1);
\draw (b) -- (b1);
\draw (e) -- (e1);
\draw (g) -- (g1);
\draw (c) to[bend left] (f);
\draw (d) to[bend right] (h);

\end{tikzpicture}
\caption{Example of the labeling of interior nodes proposed in \cite{francis2025counting} for the unlabeled spinal tree-child network depicted in Figure \ref{fig:uldecomp} (a).}\label{fig:uldecomp2}
\end{figure}

Although the \(\{L,R,Q\}\)-encoding in \cite{francis2025counting} is closely related to our construction, the two encodings emphasise different aspects. The encoding via \(C^{1}_{n,k}/{\sim}\) is designed to connect spinal networks with the word framework appearing in Conjecture~\ref{conj} (and, more broadly, with the class of words considered in \cite{pons2021combinatorial}). 
On the other hand, the \(\{L,R,Q\}\)-encoding in \cite{francis2025counting} fits naturally into the expanding-cover approach developed there.

\subsection{Encoding and counting of spinal caterpillar tree-child networks}

We now consider a natural extension of spinal tree-child networks obtained by allowing the leaves that have a reticulation as a sibling to be replaced by caterpillar structures.  
We refer to these networks as \emph{spinal caterpillar tree-child networks}.  
We denote by $\SCTC_{n,k}$ and $\NLSCTC_{n,k}$ the labeled and non-labeled versions of this class, respectively.  
As before, $n$ denotes the number of leaves and $k$ the number of reticulations.  
Observe that spinal caterpillar tree-child networks form a superclass of spinal tree-child networks.

The goal of this subsection is to obtain counting formulas for these networks by extending the word encoding developed in subsection \ref{subsec:word-encoding}. 
More precisely, we introduce a new subclass of words that relaxes the restrictions defining $C_{n,k}^1$, allowing the separation between the second and third occurrences of certain letters.  
As we will see, the same encoding procedure used for spinal networks leads to a bijection with this larger word class.

\medskip

We define the subclass $C_{n,k}^2 \subset C_{n,k}$ as the class of words $w \in C_{n,k}$ satisfying the following conditions.

\begin{itemize}
    \item[1)] $w$ has suffix $a_{k+1}a_{k+2}\ldots a_n$;

    \item[2)]For every $i\le k$, if $z$ is a prefix of $w$ such that $\#(z,a_i)=2$, then the only letters in $w$ appearing between $z$ and the third occurrence of $a_i$ are letters from $\{a_{k+1},\ldots,a_n\}$.
\end{itemize}

That is, the class $C_{n,k}^2$ consists of words in which the second and third occurrences of a letter $a_i$ with $i\le k$ may be separated, but only by letters that appear exactly twice in the word.  
In contrast with the class $C_{n,k}^1$, where the last two occurrences of each such letter must be consecutive, here these occurrences may be separated by a block formed exclusively by letters corresponding to exterior leaves.

We consider the equivalence relation on $C_{n,k}^2$ defined by permuting the first occurrences of the letters $a_{k+1},\ldots,a_n$.  
More precisely, for $w_1,w_2\in C_{n,k}^2$ we write $w_1\sim w_2$ if there exists a permutation
\[
\sigma \in S_{\{k+1,\ldots,n\}}
\]
such that $w_2$ is obtained from $w_1$ by replacing the first occurrences of $a_i$ by $a_{\sigma(i)}$ for all $i\in\{k+1,\ldots,n\}$.

The following result is the analogue of Proposition~\ref{prop:encod} for this extended class of networks.

\begin{prop}\label{prop:caterpillar_bijection}
There is a bijection between $\NLSCTC_{n,k}$ and $C_{n-1,k}^2/{\sim}$.
\end{prop}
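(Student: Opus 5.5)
We mirror the proof of Proposition~\ref{prop:encod}. First we need a structural description of $N\in\NLSCTC_{n,k}$. Such an $N$ is a spinal tree-child network $N_0$ in which each leaf $v_{\ell_i}$ that is a sibling of a reticulation $u_{\ell_i}$ has been replaced by a caterpillar. Equivalently, $N$ has a main path $s=(u_0,\dots,u_\ell)$ carrying all reticulations and all parents of reticulations. The parent $u_{\ell_i-1}$ of each reticulation $u_{\ell_i}$ has, as its non-spine child, the root of a caterpillar $(c^i_1,\dots,c^i_{m_i})$. Each $c^i_j$ with $j<m_i$ is a tree node with a leaf child, and $c^i_{m_i}$ is itself a leaf (or $m_i=0$ when the child is just a leaf). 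Following Lemma~\ref{lem:lengthspine}, we would first note that leaves and reticulations still determine the total number of tree nodes ($t=n+k-1$ by the same arc count). Hence the number of exterior leaves (those not at the bottom of a caterpillar and not the end of $s$) is again $n-k-1$.

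\textbf{Encoding.} Take the path components $P_0,\dots,P_k$ as before, except that $P_{i-1}$ now ends by descending from $u_{\ell_i-1}$ into its caterpillar: $P_{i-1}=(u_{\ell_{i-1}},\dots,u_{\ell_i-1},c^i_1,\dots,c^i_{m_i})$, with $c^i_{m_i}$ replaced by $v_{\ell_i}$ if $m_i=0$. The singleton components are the exterior leaves $w_1,\dots,w_{n-k-1}$, numbered arbitrarily. Labels are assigned exactly as in Step~3 of Proposition~\ref{prop:encod}: a reticulation and both its parents get $a_i$, and an exterior leaf and its parent get $a_{k+j}$. Every tree node is still either a reticulation parent or a leaf parent (caterpillar nodes are leaf parents), so the labeling is well defined. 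Reading the components in order gives $w$. The multiplicities and the suffix $a_{k+1}\cdots a_{n-1}$ are checked as before. For condition~2), the second occurrence of $a_i$ is the label of $u_{\ell_i-1}$. After it, the word continues with the labels of $c^i_1,\dots,c^i_{m_i-1}$ (all leaf parents, hence labels in $\{a_{k+1},\dots,a_{n-1}\}$; the bottom caterpillar leaf is unlabeled). Then $P_i$ starts with $u_{\ell_i}$, labeled $a_i$. So $w\in C^2_{n-1,k}$, and the arbitrary numbering of exterior leaves is absorbed by ${\sim}$.

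\textbf{Decoding.} Given $w\in C^2_{n-1,k}$, strip the suffix of $n-k-1$ singletons. Cut the remainder immediately before each third occurrence of $a_1,\dots,a_k$; this recovers $\tilde w_0,\dots,\tilde w_k$. In $\tilde w_{i-1}$, the letters after the second occurrence of $a_i$ are the caterpillar nodes, by condition~2). We rebuild each component as a path, appending an unlabeled leaf at the end of $P_0,\dots,P_{k-1}$ and at the end of the last component $P_k$. Then we add arcs from each labeled tree node to the first vertex of the component starting with the same label (the reticulation, or the singleton exterior leaf).

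\textbf{Verification.} The main point to settle is that the result is a spinal caterpillar tree-child network, and that the two maps are mutually inverse modulo ${\sim}$. Acyclicity holds because arcs into reticulations go from earlier positions to later ones. The tree-child property holds because each reticulation parent has a non-reticulation child: either the next vertex on its path or a caterpillar/leaf. The reticulations and their parents lie on one root-to-leaf path, namely the concatenation of spine segments, while the caterpillar parts hang off reticulation siblings as required. The delicate step is showing that the cut positions, and hence the split between spine nodes and caterpillar nodes, are uniquely determined by $w$. This follows because the spine portion of $P_{i-1}$ ends exactly at the second occurrence of $a_i$, and condition~2) forces everything between it and the third occurrence to be caterpillar labels. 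We also need $w\mapsto N$ to be independent of the representative of the ${\sim}$ class, which holds since permuting first occurrences of $a_{k+1},\dots,a_{n-1}$ only renames unlabeled exterior leaves.
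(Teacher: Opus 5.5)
Your proposal is correct and follows essentially the same route as the paper: the path components of Proposition~\ref{prop:encod} are extended by descending from $u_{\ell_i-1}$ into its caterpillar, vertices are labeled in the same way, and decoding cuts just before each third occurrence of $a_i$, with ${\sim}$ absorbing the arbitrary numbering of the exterior leaves. You spell out a bit more than the paper does, namely why the letters between the second and third occurrences of $a_i$ are exactly the caterpillar-node labels and how the split between spine and caterpillar nodes is recovered from the second occurrence of $a_i$.
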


\begin{proof}

    The proof follows the same lines as the proof of Proposition~\ref{prop:encod}, with the necessary adjustments to account for caterpillars outside the spine and the relaxed conditions defining $C_{n-1,k}^2$. We shall not go into the same level of detail as in the proof of Proposition~\ref{prop:encod}, but we will highlight the key differences and how they are addressed.

    First, the decomposition of the network into path components made in the proof of Proposition~\ref{prop:encod} can be read as follows: we start from the root (or a reticulation vertex) and follow the spine downwards until we would reach a reticulation vertex; at this point, we stop following the spine and instead visit the leaf that is the sibling of the reticulation vertex. In case of a caterpillar spinal network, we follow the same procedure, but instead of finding a leaf that is the sibling of a reticulation vertex, we may find a caterpillar structure. In this case, we follow a maximal path along the caterpillar until we reach a leaf. As in the previous case, we may find isolated leaves that do not belong to any of the path components found so far, and we add them as path components as well. See Figure~\ref{fig:spinalcater} for an example of this decomposition in the case of a spinal caterpillar network.

    Second, the labeling of the vertices is done in the same way as in the proof of Proposition~\ref{prop:encod}. Namely, we first label the reticulations (with the letters $a_1,\ldots,a_k$) and those leaves that form a (trivial) path component (with the letters $a_{k+1},\ldots,a_{n-1}$). Then, we label the (one or two) parents of each node labeled so far with the same letter as the node itself. See Figure~\ref{fig:spinalcater} for an example of this labeling.

    Finally, the word encoding the network is obtained in exactly the same way as in the proof of Proposition~\ref{prop:encod}, by reading the labels along the path components, in the order they appear along the spine and later the isolated leaves. The resulting word satisfies the relaxed conditions defining $C_{n-1,k}^2$ because the only letters that can appear between the second and third occurrences of a letter $a_i$ with $i\le k$ are those corresponding to leaves that belong to the caterpillars, and hence they are of the form $a_j$ for some $j>k$. As before, the only choice that is arbitrarily made is the order in which the trivial path components are read, which corresponds to the equivalence relation $\sim$ defined on $C_{n,k}^2$.

Conversely, given a word $w\in C_{n-1,k}^2/{\sim}$, the corresponding network can be reconstructed analogously to the process given in the proof of Proposition~\ref{prop:encod}. Indeed, $w$ will have a suffix of the form $a_{k+1}a_{k+2}\ldots a_{n-1}$, which determines the labels of the leaves that form trivial path components. The corresponding prefix of $w$ can be split into words that determine the path components, simply by cutting the word just before the third occurrence of each letter $a_i$ with $i\le k$. The reconstruction of the path components and the network itself is done in exactly the same way as in the proof of Proposition~\ref{prop:encod}. 
\end{proof}

\begin{figure}[h]
\centering

\begin{tikzpicture}[scale=0.65]

\node[mi] (a) at    (-2+0,0)        {};
\node[mi] (b) at    (-2+0,-1)       {};
\node[mi] (b1) at   (-2+1,-2)       {};
\node[mi] (c) at    (-2+0,-2)       {};
\node[mi] (d) at    (-2+0,-3)       {};
\node[mi] (e) at    (-2+0,-4)       {};
\node[mi] (e1) at   (-2+-1.5,-5)      {};
\node[mi] (e11) at  (-2+-2,-6)      {};
\node[mi] (e12) at  (-2+-1,-6)      {};
\node[bxl] (f) at   (-2+0,-5)       {};
\node[mi] (g) at    (-2+0,-6)       {};
\node[mi] (g1) at   (-2+1.5,-7)       {};
\node[mi] (g11) at  (-2+1.5+.5,-8)       {};
\node[mi] (g12) at  (-2+1-.5+.5,-8)       {};
\node[mi] (g111) at (-2+1.5+.5*2,-9)       {};
\node[mi] (g112) at (-2+1.5,-9)       {};
\node[bxl] (h) at   (-2+0,-7)       {};
\node[mi] (i) at    (-2+0,-8)       {};
\node[mi] (j) at    (-2+0.75,-9)    {};
\node[mi] (j1) at   (-2+-0.75,-9)   {};


\draw (a) -- (b) -- (c) -- (d) -- (e) -- (f) -- (g) -- (h) -- (i) -- (j);
\draw (i) -- (j1);
\draw (b) -- (b1);
\draw (e) -- (e1);
\draw (e11) -- (e1);
\draw (e12) -- (e1);
\draw (g) -- (g1);
\draw (g11) -- (g1);
\draw (g12) -- (g1);
\draw (g11) -- (g111);
\draw (g11) -- (g112);
\draw (c) to[bend left] (f);
\draw (d) to[bend right] (h);

\node[mi] (a) at    (5+0,0)        {\scriptsize };
\node[mi] (b) at    (5+0,-1)       {\scriptsize $a_3$};
\node[mi] (b1) at   (5+1,-2)       {\scriptsize $a_3$};
\node[mi] (c) at    (5+0,-2)       {\scriptsize $a_1$};
\node[mi] (d) at    (5+0,-3)       {\scriptsize $a_2$};
\node[mi] (e) at    (5+0,-4)       {\scriptsize $a_1$};
\node[mi] (e1) at   (5+-1.5,-5)      {\scriptsize $a_4$};
\node[mi] (e11) at  (5+-2,-6)      {\scriptsize };
\node[mi] (e12) at  (5+-1,-6)      {\scriptsize $a_4$};
\node[bxl] (f) at   (5+0,-5)       {\scriptsize $a_1$};
\node[mi] (g) at    (5+0,-6)       {\scriptsize $a_2$};
\node[mi] (g1) at   (5+1.5,-7)       {\scriptsize $a_5$};
\node[mi] (g11) at  (5+1.5+.5,-8)       {\scriptsize $a_6$};
\node[mi] (g12) at  (5+1-.5+.5,-8)       {\scriptsize $a_5$};
\node[mi] (g111) at (5+1.5+.5*2,-9)       {\scriptsize };
\node[mi] (g112) at (5+1.5,-9)       {\scriptsize $a_6$};
\node[bxl] (h) at   (5+0,-7)       {\scriptsize $a_2$};
\node[mi] (i) at    (5+0,-8)       {\scriptsize $a_7$};
\node[mi] (j) at    (5+0.75,-9)    {\scriptsize $a_7$};
\node[mi] (j1) at   (5+-0.75,-9)   {\scriptsize };


\draw (a) -- (b) -- (c) -- (d) -- (e) -- (f) -- (g) -- (h) -- (i) -- (j);
\draw (i) -- (j1);
\draw (b) -- (b1);
\draw (e) -- (e1);
\draw (e11) -- (e1);
\draw (e12) -- (e1);
\draw (g) -- (g1);
\draw (g11) -- (g1);
\draw (g12) -- (g1);
\draw (g11) -- (g111);
\draw (g11) -- (g112);
\draw (c) to[bend left] (f);
\draw (d) to[bend right] (h);


\draw [red, ultra thick] plot [smooth cycle] coordinates {(5+0,0.5) (5+0.45,-0.1) (5+0.5,-3.7) (5+0.1,-4.4) (5+-1.3,-5.4)(5-2+0.3,-6.2) (5-2,-6.4)(5-2-0.3,-6)(5+-1.9,-4.9)(5+-0.5,-4) (5+-0.45,-0.1)};
\draw [blue, ultra thick] plot [smooth cycle] coordinates {(5+0,-4.6)(5+0.45,-5)(5.6,-6)(6.8,-6.8)(7.8,-9)(7.5,-9.4)(6.5,-7.5)(4.6,-6)(4.55,-5)};
\draw [green, ultra thick] plot [smooth cycle] coordinates {(5,-6.6)(5.45,-7)(5.5,-8)(4.25,-9.5)(3.75,-9)(4.5,-8)(4.55,-7)};
\draw[orange, ultra thick] (b1) circle (0.3cm);
\draw[olive, ultra thick] (g112) circle (0.3cm);
\draw[teal, ultra thick] (e12) circle (0.3cm);
\draw[violet, ultra thick] (g12) circle (0.3cm);
\draw[purple, ultra thick] (j) circle (0.3cm);

\end{tikzpicture}
\caption{\small Example of a spinal caterpillar network and its decomposition into path components.}\label{fig:spinalcater}

\end{figure}

Now we will introduce a result analogous to Proposition~\ref{prop:wordsbessel}, that gives an explicit formula for $|C_{n,k}^2/{\sim}|$.

\begin{prop} \label{prop:wordsbessel2} 
For \(n\geq 1\) and \(k\leq n\) we have
\[
|C_{n,k}^2/{\sim}| = {n+2k\choose n-k}\frac{(2k)!}{2^kk!}.
\]
\end{prop}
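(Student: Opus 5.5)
The plan is to mimic the proof of Proposition~\ref{prop:wordsbessel}. I would strip each class of its rigid parts, so that it becomes a pure placement problem, and then count placements directly.

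\textbf{Step 1: reduce to a marked word.} Let \(w\in C_{n,k}^2\). By condition 1) the suffix \(a_{k+1}\cdots a_n\) is fixed, so removing it leaves a word \(u\) of length \(n+2k\). In \(u\), each \(a_i\) with \(i\le k\) occurs three times and each \(a_j\) with \(j>k\) occurs exactly once. The relation \(\sim\) permutes precisely these single occurrences. Hence a class in \(C_{n,k}^2/{\sim}\) is the same as the word \(\bar u\) obtained from \(u\) by replacing every letter \(a_j\), \(j>k\), with an anonymous marker \(\bullet\). I must check that this is well defined, i.e.\ that the prefix condition of \(C_{n,k}\) never constrains the relative order of these first occurrences. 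For \(j,j'>k\), every prefix before the suffix contains each of \(a_j,a_{j'}\) at most once. Also, every prefix containing some \(a_i\), \(i\le k\), contains it at least once. So the inequalities involving indices \(>k\) are automatically satisfied, and every \(\sigma\) maps \(C^2_{n,k}\) to itself. Consequently all \((n-k)!\) relabelings are valid, and the orbits are exactly the words \(\bar u\).

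\textbf{Step 2: separate the markers from the reticulation letters.} A marked word \(\bar u\) is determined by two pieces of data. The first is the set of \(n-k\) marker positions among the \(n+2k\) positions, which gives the factor \(\binom{n+2k}{n-k}\). The second is the word \(v\) of length \(3k\) formed by the letters \(a_1,\dots,a_k\) read in order. Condition 2) of \(C_{n,k}^2\) says that only markers may separate the second and third occurrences of \(a_i\). This is exactly the statement that these two occurrences are consecutive in \(v\). The prefix condition among \(a_1,\dots,a_k\) involves only \(v\). So the two pieces are independent, and \(v\) ranges over the words of \(C^1_{k,k}\) with the empty suffix.

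\textbf{Step 3: count \(v\).} By Proposition~\ref{prop:wordsbessel} with \(n=k\), there are \(\frac{(2k)!}{2^k k!}\) such words. In that case \(\sim\) is trivial, and the set \(\cB_{k,k}\) consists of the perfect matchings of \(2k\) points. Multiplying the two independent counts gives the stated formula.

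\textbf{Main obstacle.} The delicate point is the independence claim in Step 2, namely that inserting markers neither creates nor destroys any constraint. This requires rechecking the prefix condition of \(C_{n,k}\) at prefixes that end inside a gap between a second and a third occurrence. At such a prefix, \(a_i\) has count \(2\) while letters \(a_{i'}\) with \(i'>i\) may have count \(\le 3\). However, the relative counts among the \(a_{i'}\), \(i'\le k\), are the same as at the corresponding prefix of \(v\). Hence the condition holds for \(\bar u\) if and only if it holds for \(v\). I would spell this out carefully. I would also confirm, as in the proof of Proposition~\ref{prop:wordsbessel}, that the labeling of the pairs by the order of their maximal elements is the unique labeling compatible with the prefix condition.
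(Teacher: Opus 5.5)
Your proposal is correct and follows essentially the paper's argument. You strip the fixed suffix, treat the single occurrences of $a_{k+1},\dots,a_n$ as anonymous markers modulo $\sim$ to get the factor $\binom{n+2k}{n-k}$, and count the remaining word on $a_1,\dots,a_k$, whose second and third occurrences become consecutive, as a perfect matching of $2k$ points. The only differences are that you cite Proposition~\ref{prop:wordsbessel} with $n=k$ instead of repeating the compression argument, and that you spell out checks the paper skips: that $\sim$ acts freely on the marker positions and that the prefix condition is unaffected by inserting markers.
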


\begin{proof}
Let $w \in C_{n,k}^2$. As in the proof of Proposition~\ref{prop:wordsbessel}, we begin 
by removing the fixed suffix $a_{k+1}\cdots a_n$, leaving a word of length $n+2k$ 
containing three occurrences of each $a_i$ for $1\le i\le k$ and one occurrence of 
each $a_j$ for $k+1\le j\le n$.

By definition of $C_{n,k}^2$, between the second and third occurrences of any $a_i$ 
only letters from $\{a_{k+1},\ldots,a_n\}$ may appear. Once these singleton letters 
are removed, the second and third occurrences of each $a_i$ become consecutive in the 
resulting word. Compressing each such consecutive pair into a single block, exactly as 
in the previous proof, yields a word $w^\ast$ of length $2k$, where each letter $a_i$ 
appears once as a singleton (its first occurrence) and once as a block.

By the same bijection argument as before, $w^\ast$ determines a partition of the $2k$ 
positions $\{1,\ldots,2k\}$ into $k$ unordered pairs, and the number of such 
partitions is
\[
    \binom{2k}{2k}\frac{(2k)!}{2^k k!} = \frac{(2k)!}{2^k k!}.
\]
It remains to place the $n-k$ first appearances of the letters $a_{k+1},\ldots,a_n$ among the $n+2k$ positions of the word prior to compression. Since we work modulo $\sim$, only the 
choice of positions matters, not the assignment of letters to them, giving 
$\binom{n+2k}{n-k}$ choices.

By the product principle,
\[
    |C_{n,k}^2/{\sim}| \;=\; \binom{n+2k}{n-k}\cdot\frac{(2k)!}{2^k k!}. 
\]
\end{proof}

As a direct consequence of Propositions  \ref{prop:caterpillar_bijection} and \ref{prop:wordsbessel2}, we obtain the following corollary.

\begin{coro}
      For $n\geq 1$ and $k\leq n-1$ we have

    $$|\NLSCTC_{n,k}| = \binom{n-1+2k}{n-1-k}\cdot\frac{(2k)!}{2^k k!}. 
   $$
\end{coro}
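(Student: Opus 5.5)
The corollary follows by composing Proposition~\ref{prop:caterpillar_bijection} with Proposition~\ref{prop:wordsbessel2}. First I apply Proposition~\ref{prop:caterpillar_bijection} with the given $n\ge 1$ and $k$. This yields a bijection between $\NLSCTC_{n,k}$ and $C^2_{n-1,k}/{\sim}$, so
\[
|\NLSCTC_{n,k}|=|C^2_{n-1,k}/{\sim}|.
\]
Next I substitute $n-1$ for $n$ in Proposition~\ref{prop:wordsbessel2}, which gives
\[
|C^2_{n-1,k}/{\sim}|=\binom{n-1+2k}{n-1-k}\frac{(2k)!}{2^kk!}.
\]

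The hypotheses have to be reconciled. Proposition~\ref{prop:wordsbessel2} requires its first index $n-1\ge 1$ and $k\le n-1$. The condition $k\le n-1$ is exactly the hypothesis of the corollary. It is also forced by the network structure: each reticulation needs a distinct non-spine sibling leaf (or caterpillar), so a network has at most $n-1$ reticulations.

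The case $n=1$ falls outside Proposition~\ref{prop:wordsbessel2}, so I treat it directly. Here $k=0$, and there is a single network, the root arc to one leaf. The formula gives $\binom{0}{0}\cdot 1=1$, and $C^2_{0,0}$ consists of the empty word alone, so the two sides agree.

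The only step needing care is these boundary cases: when $k>n-1$, the binomial coefficient has negative lower index and is read as $0$. Beyond that the argument is a direct substitution.
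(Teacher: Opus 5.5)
Your proposal is correct and matches the paper, which obtains the corollary directly by composing the bijection of Proposition~\ref{prop:caterpillar_bijection} with the count of Proposition~\ref{prop:wordsbessel2} evaluated at first index $n-1$. Your separate check of the case $n=1$ is a small extra piece of care that the paper leaves implicit; it is needed because that case falls outside the range of Proposition~\ref{prop:wordsbessel2} after the index shift.
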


\section{Generating functions for counting spinal networks}\label{sec:gf}

In this section we associate to spinal tree-child networks a class of marked trees that admit an explicit combinatorial specification. This specification is then translated into bivariate generating functions, which allow us to derive explicit counting formulas for the number of spinal tree-child networks in both the labeled and unlabeled settings.


\subsection{Marked trees and their relation with spinal tree-child networks}

In this subsection we construct a class of \emph{marked trees} associated to a certain subclass of spinal tree-child networks, and we establish a precise correspondence between this marked class and the original class $\STC$ of spinal tree-child networks. 



Let $N \in \STC_{n,k}$ be a spinal tree-child network with spine $(u_0,u_1,\ldots,u_\ell)$. The spinal structure induces a natural linear ordering of the reticulation vertices, which we label by $\{r_i\}_{i=1}^k$ (notice that, for clarity, we now use a different notation for reticulations than the one used in the previous section), according to their occurrence along the spine from the root to the terminal leaf. Also, to ease notation, we label the root $u_0$ with $r_0$ and the terminal leaf $u_\ell$ with $r_{k+1}$. Also, for each $i=0,\ldots,k+1$, we let $\ell_i$ be the index such that $u_{\ell_i}$ is labeled with $r_i$. Finally, for each $i=1,\ldots,k+1$ we (temporarily) label with $s_i$ the node $u_{\ell_i-1}$, that is, the parent along the spine of the node labeled with $r_i$. We shall hereafter identify the newly labeled nodes with their labels.

One remark has to be made regarding the last reticulation vertex $r_k$. There are two cases. The first one corresponds to the case where $r_{k+1}$, the terminal leaf of the spine, is the single child of $r_k$, the last reticulation vertex of the spine. In this case, $r_k=s_{k+1}$. The second case corresponds to the case where $r_{k+1}$ has a sibling, which must be a leaf. In this case, $s_{k+1}$ is a tree node and $r_k\neq s_{k+1}$. We shall refer to the first case as the \emph{non-cherry} case, and to the second one as the \emph{cherry} case. 
We shall indicate with $\STCch_{n,k}$ and $\STCnc_{n,k}$ the subclasses of $\STC_{n,k}$ consisting of networks falling in the cherry and non-cherry cases, respectively. 

Our strategy will be to count these two subclasses of networks separately in order to obtain the global count.

\subsubsection*{The cherry case}

Let $N$ be a spinal tree-child network in $\STCch_{n,k}$. According to the labeling described above, we have that all the nodes $s_1,\ldots,s_{k+1},r_0,r_1,\ldots,r_{k+1}$ are pairwise different.

We perform on $N$ the following operation: We remove the arcs $s_jr_j$ for $j=1,\ldots,k+1$, and we remove the leaf $r_{k+1}$ (which has become an isolated node), keeping the labels of the $n-1$ leaves and the nodes $r_0,r_1,\ldots,r_k,s_1,\dots,s_{k+1}$, which have become \emph{elementary} nodes (that is, nodes with indegree and outdegree equal to one). Notice that the single child of each node $s_i$ is a leaf, by the same arguments given in the proof of Proposition~\ref{prop:encod}.
We refer to $T$ as the \emph{marked tree} associated to $N$.

With the removal of the mentioned arcs, the spine of $N$ is broken into $k+1$ directed paths $P_0,\dots,P_k$, where each $P_i$ connects $r_i$ to $s_{i+1}$. We shall refer to these paths as the \emph{spinal subpaths} of $T$. These paths cover all the internal nodes of $T$, and the only nodes that do not lie on any spinal subpath are the $n-1$ leaves. Each path starts and ends in an elementary node, and each intermediate node of the path has exactly two children, one of which is the next node in the path and the other one is either a leaf or the first node of another spinal subpath. Note also that none of these paths can be trivial (i.e., consisting of a single node), since otherwise in $N$ we would have two consecutive reticulations, which is not possible. Also, the single child of each elementary node $s_i$ must be a leaf.

It is obvious that $N$ can be reconstructed from $T$ and the label of the removed leaf $r_{k+1}$. Indeed, one only needs to restore the arcs from $s_i$ to $r_i$ for $i=1,\ldots,k+1$, and to restore the leaf $r_{k+1}$ as a child of $s_{k+1}$.

Notice also that the labels $s_i$ can be safely removed from $T$ without losing any information. Indeed, if we remove these labels, for each unlabeled elementary node, by following the tree upwards, we eventually reach a node labeled with some $r_i$, and we can infer that the label of the elementary node was $s_{i+1}$. From now on, we shall forget about these labels and consider $T$ as a graph with $n-1$ labeled leaves, $k+1$ labeled elementary nodes (one of which is the root $r_0$), and $k+1$ unlabeled elementary nodes. We let $\MT_{n-1,k+1}$ denote the class of all such marked trees obtained from spinal tree-child networks in $\STC_{n,k}$ through the above construction.

One important remark has to be made regarding the class $\MT_{n-1,k+1}$. The construction of a marked tree requires the choice of one of the two possible spines in the original network $N$, and the removal of the terminal leaf of that spine. Hence, each network $N$ gives rise to two marked trees, one per choice of spine. Moreover, the terminal leaf that is removed can carry any label in $[n]$, so the leaf set of the resulting marked tree is a subset of $[n]$ of size $n-1$, not necessarily $[n-1]$ itself.
We temporarily keep the original leaf labels, so that for each $j\in[n]$, we write $\MT_{n-1,k+1}(j)$ for the class of marked trees whose leaf set is $[n]\setminus\{j\}$. The map that sends a pair $(N,\sigma)$, where $N\in\STCch_{n,k}$ and $\sigma$ is a spine of $N$ with terminal leaf labeled $j$, to the corresponding marked tree in $\MT_{n-1,k+1}(j)$ is a bijection. Therefore
$$2\,|\STCch_{n,k}| \;=\; \sum_{j=1}^{n}|\MT_{n-1,k+1}([n]\setminus\{j\})|.$$
By symmetry of the labels, all the classes $\MT_{n-1,k+1}(j)$ have the same cardinality, which we denote simply by $|\MT_{n-1,k+1}|$. Hence
$$|\STCch_{n,k}| \;=\; \frac{n}{2}\,|\MT_{n-1,k+1}|.$$

We illustrate in Figure~\ref{fig:marked_alternative} the construction of the marked version of a spinal tree-child network in the cherry case. Notice that the leaf that has been removed in the construction is the one labeled with $5$; we have done so to emphasize that $T$ has $n-1$ leaves, but the removed leaf needs not be the one labeled with $n$ in the original network. The first subfigure depicts the original spinal tree-child network, the second one shows the labels of the nodes $r_i$ and $s_i$, the third one shows the marked version of the network, and the fourth one shows the same marked version but without the labels of the nodes $s_i$. The spinal subpaths are indicated with thicker lines.

We can also give an intrinsic description of the class $\MT_{n-1,k+1}$, without referring to the original spinal tree-child networks. Indeed, the class $\MT_{n-1,k+1}$ consists of rooted trees with $n-1$ labeled leaves (say by the set $\{1,\dots,n-1\}$), $k+1$ labeled elementary nodes (say by the set $\{r_0,\dots,r_k\}$), and $k+1$ unlabeled elementary nodes, such that the following conditions hold:
\begin{itemize}
    \item The root is labeled with $r_0$, and in particular it is an elementary node.
    \item Following the tree upwards from any unlabeled elementary node, the first elementary node that is encountered is labeled, say with label $r_i$. We shall refer to the starting unlabeled elementary node as $s_{i+1}$, and we can define $P_i$ as the path from $r_i$ to $s_{i+1}$ (with both ends included). 
    \item The paths $P_0,\dots,P_k$ defined in the previous item are pairwise disjoint, and they cover all the internal nodes of the tree.
    \item Each intermediate node of each path $P_i$ has exactly one child out of the path, which is either a leaf or the first node of another path $P_j$ with $j>i$, and in case that this intermediate node is the last intermediate node of $P_i$, then its child that does not lie on $P_i$ is a leaf.
\end{itemize}

Indeed, given a tree $T$ satisfying the above conditions, we can reconstruct a spinal tree-child network $N$ by adding an isolated node labeled with $r_{k+1}$, and the arcs from $s_i$ to $r_i$ for $i=1,\ldots,k+1$. Notice that the added node is now a leaf and we label it with $n$. The condition that if a node in a path $P_i$ has a child that is the first node of another path $P_j$, then $j>i$, ensures that the resulting graph is acyclic. The condition that each internal node of $T$ lies on some path $P_i$, together with the condition the last intermediate node of each path $P_i$ has a leaf as a child, ensures that the resulting network is tree-child.
The condition that the paths cover the internal nodes of $T$ ensures that the resulting graph admits a spine, which will be formed by glueing together the paths $P_0,\dots,P_k$ using the restored arcs, including the one that ends at the added leaf. 

Furthermore, this intrinsic description of $\MT_{n-1,k+1}$ can also be reformulated in a recursive way. Indeed, a tree $T$ in $\MT_{n-1,k+1}$ can be described as follows. Begin with a path $P_0$ starting at the root $r_0$, and for each intermediate node of the path, attach to it another child, which is either a leaf or the first node of another path $P_i$ rooted at some $r_i$ with $i>0$ (with the condition that a leaf must be attached to the last intermediate node of the path). This path $P_i$ rooted at $r_i$ is analogously described as a path whose intermediate nodes have attached to them either leaves or the first node of another path $P_j$ rooted at some $r_j$ with $j>i$, and so on. 

\begin{figure}[h]
\centering
\begin{tikzpicture}[scale=0.6]

    \node[mi] (a) at    (0,0)        {};
    \node[mi] (b) at    (0,-1)       {};
    \node[mi] (b1) at   (1,-2)       {\small  1};
    \node[mi] (c) at    (0,-2)       {};
    \node[mi] (d) at    (0,-3)       {};
    \node[mi] (e) at    (0,-4)       {};
    \node[mi] (e1) at   (-1,-5)      {\small  2};
    \node[bxl] (f) at   (0,-5)       {};
    \node[mi] (g) at    (0,-6)       {};
    \node[mi] (h1) at   (1,-8)       {\small  3};
    \node[mi] (h) at    (0,-7)       {};
    \node[bxl] (i) at   (0,-8)       {};
    \node[mi] (j) at    (0,-9)       {};
    \node[bxl] (k) at   (0,-10)       {};
    \node[mi] (k1) at   (0+1,-10)       {\small 4};
    \node[mi] (l) at    (0,-11)       {};
    \node[mi] (m) at    (0.75,-12)    {\small 6};
    \node[mi] (m1) at   (-0.75,-12)   {\small 5};

    \draw (a) -- (b) -- (c) -- (d) -- (e) -- (f) -- (g) -- (h) -- (i) -- (j)--(k) --(l)--(m);
    \draw (l) -- (m1);
    \draw (b) -- (b1);
    \draw (e) -- (e1);
    \draw (h) -- (h1);
    \draw (j) -- (k1);
    \draw (c) to[bend left] (f);
    \draw (g) to[bend right] (k);
    \draw (d) to[bend right] (i);
\end{tikzpicture}
\qquad
\begin{tikzpicture}[scale=0.6]

    \node[bxl] (a) at    (0,0)        {\small$r_0$};
    \node[mi] (b) at    (0,-1)       {};
    \node[mi] (b1) at   (1,-2)       {\small  1};
    \node[mi] (c) at    (0,-2)       {};
    \node[mi] (d) at    (0,-3)       {};
    \node[mi] (e) at    (0,-4)       {\small $s_1$};
    \node[mi] (e1) at   (-1,-5)      {\small  2};
    \node[bxl] (f) at   (0,-5)       {\small $r_1$};
    \node[mi] (g) at    (0,-6)       {};
    \node[mi] (h1) at   (1,-8)       {\small  3};
    \node[mi] (h) at    (0,-7)       {\small $s_2$};
    \node[bxl] (i) at   (0,-8)       {\small $r_2$};
    \node[mi] (j) at    (0,-9)       {\small $s_3$};
    \node[bxl] (k) at   (0,-10)       {\small $r_3$};
    \node[mi] (k1) at   (0+1,-10)       {\small 4};
    \node[mi] (l) at    (0,-11)       {\small $s_4$};
    \node[mi] (m) at    (0.75,-12)    {\small 6};
    \node[mi] (m1) at   (-0.75,-12)   {\small $r_4$};

    \draw (a) -- (b) -- (c) -- (d) -- (e) -- (f) -- (g) -- (h) -- (i) -- (j) -- (k) --(l)--(m);
    \draw (l) -- (m1);
    \draw (b) -- (b1);
    \draw (e) -- (e1);
    \draw (h) -- (h1);
    \draw (j) -- (k1);
    \draw (c) to[bend left] (f);
    \draw (g) to[bend right] (k);
    \draw (d) to[bend right] (i);
\end{tikzpicture}
\qquad
\begin{tikzpicture}[scale=0.6]

    \node[bxl] (a) at    (0,0)        {\small$r_0$};
    \node[mi] (b) at    (0,-1)       {};
    \node[mi] (b1) at   (1,-2)       {\small  1};
    \node[mi] (c) at    (0,-2)       {};
    \node[mi] (d) at    (0,-3)       {};
    \node[mi] (e) at    (0,-4)       {\small $s_1$};
    \node[mi] (e1) at   (-1,-5)      {\small  2};
    \node[bxl] (f) at   (0,-5)       {\small $r_1$};
    \node[mi] (g) at    (0,-6)       {};
    \node[mi] (h1) at   (1,-8)       {\small  3};
    \node[mi] (h) at    (0,-7)       {\small $s_2$};
    \node[bxl] (i) at   (0,-8)       {\small $r_2$};
    \node[mi] (j) at    (0,-9)       {\small $s_3$};
    \node[bxl] (k) at   (0,-10)       {\small $r_3$};
    \node[mi] (k1) at   (0+1,-10)       {\small 4};
    \node[mi] (l) at    (0,-11)       {\small $s_4$};
    \node[mi] (m) at    (0.75,-12)    {\small 6};

    \draw (a) -- (b) -- (c) -- (d) -- (e);
    \draw (f) -- (g) -- (h);
    \draw (i) -- (j);
    \draw (k) --(l)--(m);
    \draw (b) -- (b1);
    \draw (e) -- (e1);
    \draw (h) -- (h1);
    \draw (j) -- (k1);
    \draw (c) to[bend left] (f);
    \draw (g) to[bend right] (k);
    \draw (d) to[bend right] (i);
\end{tikzpicture}
\qquad
\begin{tikzpicture}[scale=0.6]

    \node[bxl] (a) at    (0,0)        {\small$r_0$};
    \node[mi] (b) at    (0,-1)       {};
    \node[mi] (b1) at   (1,-2)       {\small  1};
    \node[mi] (c) at    (0,-2)       {};
    \node[mi] (d) at    (0,-3)       {};
    \node[mi] (e) at    (0,-4)       {};
    \node[mi] (e1) at   (-1,-5)      {\small  2};
    \node[bxl] (f) at   (0,-5)       {\small $r_1$};
    \node[mi] (g) at    (0,-6)       {};
    \node[mi] (h1) at   (1,-8)       {\small  3};
    \node[mi] (h) at    (0,-7)       {};
    \node[bxl] (i) at   (0,-8)       {\small $r_2$};
    \node[mi] (j) at    (0,-9)       {};
    \node[bxl] (k) at   (0,-10)       {\small $r_3$};
    \node[mi] (k1) at   (0+1,-10)       {\small 4};
    \node[mi] (l) at    (0,-11)       {};
    \node[mi] (m) at    (0.75,-12)    {\small 6};

    \draw[very thick] (a) -- (b) -- (c) -- (d) -- (e);
    \draw[very thick] (f) -- (g) -- (h);
    \draw[very thick] (i) -- (j);
    \draw[very thick] (k) --(l);
    \draw (l)--(m);
    \draw (b) -- (b1);
    \draw (e) -- (e1);
    \draw (h) -- (h1);
    \draw (j) -- (k1);
    \draw (c) to[bend left] (f);
    \draw (g) to[bend right] (k);
    \draw (d) to[bend right] (i);
\end{tikzpicture}
\caption{Depiction of the process of obtaining the marked version of a spinal tree-child network.\label{fig:marked_alternative}}
\end{figure}

\subsubsection*{The non-cherry case}

Now we consider the case where $N$ is a spinal tree-child network in $\STC_{n,k}$ such that the terminal leaf has no sibling, and is the single child of the last reticulation vertex $r_k$. We shall proceed by associating to $N\in\STCnc_{n,k}$ another spinal tree-child network $N' \in \STCch_{n,k-1}$. The reticulation vertex $r_k$ has two parents, one of which is $s_k$, and the other one is a node $p$ that lies in the spine. Notice that the other child of $p$ (along the spinal path) must be a tree node, since otherwise it would have two reticulation children, and that the other child of $s_k$ must be a leaf. After removing the arc from $p$ to $r_k$, both nodes become elementary. Each of them can then be deleted, linking its single parent directly to its single child. See Figure~\ref{fig:marked_alternative_degenerate} for a depiction of this process.

The resulting graph $N'$ is a spinal tree-child network in $\STCch_{n,k-1}$. Indeed, it is clear that it admits a spine, and the fact that it is tree-child follows from the remark that the other child of $p$ is a tree node. Finally it falls in the cherry case since  now the leaf originally on the spine has a sibling, the child of $s_{k}$ outside the spine. 

Going the other way, given a spinal tree-child network in the cherry case $N' \in \STCch_{n,k-1}$, we can construct a spinal tree-child network corresponding to the non-cherry case $N\in\STCnc_{n,k}$. 
Indeed, from the construction above, it follows that the process amounts to: (1) choosing two suitable arcs on the spine of $N’$ (see below for the restrictions on the admissible arcs); (2) subdividing each of them with a new node, which will be identified as $p$ and $r_k$; and (3) connecting $p$ to $r_k$.
The arc on which $r_k$ is placed must be one of the two arcs forming the cherry at the end of $N’$. As for the placement of $p$, the chosen arc must lie on the spine and its terminal node must be a tree node. Hence, there are as many choices as tree nodes, namely $n+(k-1)-1 = n+k-2$ (see Lemma~\ref{lem:lengthspine}). 
Therefore, $N$ can be reconstructed from $N’$ in $2(n+k-2)$ ways, and consequently
\[
|\STCnc_{n,k}| = 2(n+k-2)\,|\STCch_{n,k-1}|.
\]


\subsubsection*{Counting networks in the labeled and unlabeled setting}

In the previous subsections we have found a relation between the number of labeled spinal networks, both in the cherry and the non-cherry case. These results can be summarized in the following result.

\begin{prop}\label{prop:marked}
For all $n\geq 1$ and $k\geq 0$,
$$|\STC_{n,k}| = \frac{n}{2}\,|\MT_{n-1,k+1}| + n(n+k-2)\,|\MT_{n-1,k}|.$$
\end{prop}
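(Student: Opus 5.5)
The plan is to split $\STC_{n,k}$ into the disjoint subclasses $\STCch_{n,k}$ and $\STCnc_{n,k}$ and count each with the relations established above. Every spinal tree-child network falls in exactly one of the two cases, since the terminal leaf $r_{k+1}$ either has a sibling leaf or is the single child of $r_k$. Hence
\[
|\STC_{n,k}| = |\STCch_{n,k}| + |\STCnc_{n,k}|.
\]

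For the first summand I will invoke the identity $|\STCch_{n,k}| = \frac{n}{2}|\MT_{n-1,k+1}|$. This comes from the two-to-one correspondence between pairs (network, choice of spine) and marked trees, together with the symmetry over the label $j$ of the removed terminal leaf. For the second summand I will use $|\STCnc_{n,k}| = 2(n+k-2)|\STCch_{n,k-1}|$. I then apply the cherry identity again with $k-1$ in place of $k$, which gives $|\STCch_{n,k-1}| = \frac{n}{2}|\MT_{n-1,k}|$. Substituting, the non-cherry contribution is $2(n+k-2)\cdot\frac{n}{2}|\MT_{n-1,k}| = n(n+k-2)|\MT_{n-1,k}|$. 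Adding the two contributions yields the stated formula.

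The step needing the most care is the range of validity, namely the boundary cases $k=0$ and $n=1$. For $k=0$ there is no reticulation, so $\STCnc_{n,0}=\emptyset$. Here I will interpret $|\MT_{n-1,0}|$ as $0$: a marked tree must contain at least the labeled root $r_0$, so it has at least one labeled elementary node and $\MT_{\cdot,0}$ is empty. With this convention the second term vanishes, consistent with the definitions.

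For $n=1$ the network is the single-arc tree when $k=0$. I will check directly that $\frac{1}{2}|\MT_{0,1}|$ gives $1$. Since the spine is unique there, the factor-$2$ double counting must be handled by a convention, or the small case must be verified by hand and $\MT_{0,1}$ defined accordingly. I expect that reconciling the ``two spines per cherry network'' argument with such degenerate cases is the only real obstacle. Every cherry network with $n\ge2$ does have exactly two spines, one through each leaf of the final cherry. So the identities hold verbatim there, and the degenerate cases are settled by stating the conventions for empty classes explicitly.
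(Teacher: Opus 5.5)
Your argument is the paper's: you add $|\STCch_{n,k}|=\frac{n}{2}|\MT_{n-1,k+1}|$ and $|\STCnc_{n,k}|=2(n+k-2)|\STCch_{n,k-1}|=n(n+k-2)|\MT_{n-1,k}|$, and your convention $|\MT_{n-1,0}|=0$ correctly handles $k=0$ for $n\ge 2$. The only thing to revise is the case $(n,k)=(1,0)$, which cannot actually be checked: the one-leaf network is literally in the non-cherry configuration ($r_1$ is the only child of $r_0$), the coefficient $n+k-2$ equals $-1$, and the specification in Proposition~\ref{prop:especif_marked_trees} gives every marked tree at least one leaf, so the right-hand side is $0\neq 1=|\STC_{1,0}|$; setting $|\MT_{0,1}|=2$ by fiat is a patch rather than a verification, so state the identity for $n\ge 2$, which is the only range the paper uses later (Proposition~\ref{prop:stc_formula_gf} assumes $n>1$).
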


We can adapt the counting given above for labeled spinal networks to unlabeled networks, both in the cherry and in the non-cherry case. 

In the first place, the characterization of the class $\MT_{n-1,k+1}$ as the marked trees obtained from networks in $\STCch_{n,k}$, as well as both the intrinsic and recursive descriptions given in the subsection dedicated to the cherry case can be translated \emph{mutatis mutandis} to the unlabeled case. We shall refer to this class of marked trees as $\NLMT$, and notice that trees in this class have their leaves unlabeled, but some of their elementary nodes are labeled. Also, we will indicate by $\NLSTCch$ and $\NLSTCnc$ the unlabeled spinal tree-child networks corresponding respectively to the cherry and non-cherry case.

Some more caution has to be taken in order to relate the number of networks in $\NLSTCch_{n,k}$ and the number of trees in $\NLMT_{n-1,k+1}$. We recall that, when counting how many different networks can be obtained from each marked tree, in the labeled case we had to keep in mind that each network has two different spines, corresponding to the two (distinguishable) leaves of the final cherry. However, in the unlabeled setting, a spinal network in the cherry case has a single spine, since both leaves in the cherry are indistinguishable. Also, the counting we had to do in order to deal with which of the integers in $[n]$ was removed to construct the tree also loses its meaning, since all the leaves are indistinguishable. In other words, given an unlabeled marked tree, a single spinal network ending in a cherry can be reconstructed. Hence, we get that
$$|\NLSTCch_{n,k}|=|\NLMT_{n-1,k+1}|.$$

As for the non-cherry case, we have to count how many such networks can be obtained from a given network in the cherry case. Following the notations given above in the labeled case, the two choices of where to put the node $r_k$ reduce to a single one (since the leaves in the cherry are interchangeable).
As for the choice of where to allocate the node $p$, the $n+k-2$ possibilities in the labeled case remain pairwise distinct, since these choices are determined by their position along the spine rather than by leaf labels; consequently, the enumeration is unchanged.
 Hence we get that
$$|\NLSTCnc_{n,k}|=(n+k-2)|\NLSTCch_{n,k-1}|.$$

These computations can be summarized in the following result.

\begin{prop}\label{prop:marked_unlabeled}
For all $n\geq 1$ and $k\geq 0$,
$$|\NLSTC_{n,k}| = |\NLMT_{n-1,k+1}| + (n+k-2) |\NLMT_{n-1,k}|.$$
\end{prop}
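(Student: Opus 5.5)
The identity follows from splitting $\NLSTC_{n,k}$ into its cherry and non-cherry parts and combining the two relations derived just before the statement. The proof mirrors that of Proposition~\ref{prop:marked} for the labeled case.

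First, every unlabeled spinal tree-child network falls into exactly one of the two cases, as determined by whether the terminal leaf of the spine has a sibling. This gives the disjoint union
\[
|\NLSTC_{n,k}| = |\NLSTCch_{n,k}| + |\NLSTCnc_{n,k}|.
\]
For the cherry part, we use the unlabeled version of the marked-tree construction. We delete the arcs $s_jr_j$ and the terminal leaf, which yields a tree in $\NLMT_{n-1,k+1}$. Conversely, we reconstruct the network by restoring the arcs $s_i r_i$ and attaching a new leaf below $s_{k+1}$. The point to verify is that this is a genuine bijection in the unlabeled setting. The two leaves of the final cherry are indistinguishable, so the choice of spine, which created the factor $2$ in the labeled case, disappears. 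Likewise, no leaf label is removed, so there is no factor $n$. This gives $|\NLSTCch_{n,k}|=|\NLMT_{n-1,k+1}|$.

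For the non-cherry part, we use the reduction $N\mapsto N'\in\NLSTCch_{n,k-1}$. It removes the arc $p r_k$ and suppresses the two elementary nodes that this creates. The inverse subdivides one cherry arc, which is now a single choice up to isomorphism, to create $r_k$. It also subdivides the spine arc entering one of the $n+k-2$ tree nodes of $N'$ (Lemma~\ref{lem:lengthspine}) to create $p$, and then adds the arc $p r_k$. We must check two things. The first is that distinct choices of $p$ give non-isomorphic unlabeled networks. This holds because an unlabeled spinal network in the non-cherry case has a unique spine and the position of $p$ along it is intrinsic. The second is that every choice yields a tree-child network. This holds because the spinal child of $p$ is a tree node. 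Together these give $|\NLSTCnc_{n,k}|=(n+k-2)|\NLSTCch_{n,k-1}|$. Applying the cherry identity with $k-1$ in place of $k$ then gives $(n+k-2)|\NLMT_{n-1,k}|$. Summing the two parts yields the formula.

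The main obstacle is the non-isomorphism and uniqueness claims in the unlabeled setting. They require showing that the spine of an unlabeled network in the non-cherry case is canonically determined. I would argue that the terminal leaf is the unique leaf whose parent is a reticulation with no leaf sibling, and that the spine is then forced by tracing parents back to the root. The edge cases also need to be handled by convention. When $k=0$ there is no non-cherry network and the second term vanishes, and $|\NLMT_{n-1,0}|=0$ is consistent with this. Small values of $n$ should be checked separately.
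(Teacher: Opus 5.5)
Your proposal is correct and follows the paper's argument: the cherry/non-cherry split, $|\NLSTCch_{n,k}|=|\NLMT_{n-1,k+1}|$, and $|\NLSTCnc_{n,k}|=(n+k-2)\,|\NLSTCch_{n,k-1}|$. Your spine-uniqueness check, which the paper only asserts, is more cleanly justified by noting that a Hamiltonian path through the internal vertices of a DAG is necessarily unique, rather than by tracing parents upward through reticulations. One caveat on your edge cases: for $(n,k)=(1,0)$ the one-leaf network is non-cherry in the paper's sense ($r_0=s_1$), so the left side is $1$ while the right side is $0$; the paper's statement has the same degeneracy, and the identity should be read for $n\ge 2$.
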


\begin{figure}[h]
\centering
\begin{tikzpicture}[scale=0.6]

    \node[mi] (a) at    (0,0)        {};
    \node[mi] (b) at    (0,-1)       {};
    \node[mi] (b1) at   (1,-2)       {\small  1};
    \node[mi] (c) at    (0,-2)       {};
    \node[mi] (d) at    (0,-3)       {};
    \node[mi] (e) at    (0,-4)       {};
    \node[mi] (e1) at   (-1,-5)      {\small  2};
    \node[bxl] (f) at   (0,-5)       {};
    \node[mi] (g) at    (0,-6)       {\small $p$};
    \node[mi] (h1) at   (1,-8)       {\small  3};
    \node[mi] (h) at    (0,-7)       {};
    \node[bxl] (i) at   (0,-8)       {};
    \node[mi] (j) at    (0,-9)       {\small $s_3$};
    \node[bxl] (k) at   (0,-10)       {\small $r_3$};
    \node[mi] (k1) at   (0+1,-10)       {\small 4};
    \node[mi] (l) at    (0,-11)       {\small 5};

    \draw (a) -- (b) -- (c) -- (d) -- (e) -- (f) -- (g) -- (h) -- (i) -- (j)--(k) --(l);
    \draw (b) -- (b1);
    \draw (e) -- (e1);
    \draw (h) -- (h1);
    \draw (j) -- (k1);
    \draw (c) to[bend left] (f);
    \draw (g) to[bend right] (k);
    \draw (d) to[bend right] (i);
\end{tikzpicture}
\qquad
\begin{tikzpicture}[scale=0.6]

    \node[mi] (a) at    (0,0)        {};
    \node[mi] (b) at    (0,-1)       {};
    \node[mi] (b1) at   (1,-2)       {\small  1};
    \node[mi] (c) at    (0,-2)       {};
    \node[mi] (d) at    (0,-3)       {};
    \node[mi] (e) at    (0,-4)       {};
    \node[mi] (e1) at   (-1,-5)      {\small  2};
    \node[bxl] (f) at   (0,-5)       {};
    \node[mi] (g) at    (0,-6)       {\small $p$};
    \node[mi] (h1) at   (1,-8)       {\small  3};
    \node[mi] (h) at    (0,-7)       {};
    \node[bxl] (i) at   (0,-8)       {};
    \node[mi] (j) at    (0,-9)       {\small $s_3$};
    \node[bxl] (k) at   (0,-10)       {\small $r_3$};
    \node[mi] (k1) at   (0+1,-10)       {\small 4};
    \node[mi] (l) at    (0,-11)       {\small 5};

    \draw (a) -- (b) -- (c) -- (d) -- (e) -- (f) -- (g) -- (h) -- (i) -- (j)--(k) --(l);
    \draw (b) -- (b1);
    \draw (e) -- (e1);
    \draw (h) -- (h1);
    \draw (j) -- (k1);
    \draw (c) to[bend left] (f);
    \draw (d) to[bend right] (i);
\end{tikzpicture}
\qquad
\begin{tikzpicture}[scale=0.6]

    \node[mi] (a) at    (0,0)        {};
    \node[mi] (b) at    (0,-1)       {};
    \node[mi] (b1) at   (1,-2)       {\small  1};
    \node[mi] (c) at    (0,-2)       {};
    \node[mi] (d) at    (0,-3)       {};
    \node[mi] (e) at    (0,-4)       {};
    \node[mi] (e1) at   (-1,-5)      {\small  2};
    \node[bxl] (f) at   (0,-5)       {};
    \node[mi] (h1) at   (1,-8)       {\small  3};
    \node[mi] (h) at    (0,-7)       {};
    \node[bxl] (i) at   (0,-8)       {};
    \node[mi] (j) at    (0,-9)       {};
    \node[mi] (k1) at   (0+1,-10)       {\small 4};
    \node[mi] (l) at    (0,-11)       {\small 5};

    \draw (a) -- (b) -- (c) -- (d) -- (e) -- (f) -- (h) -- (i) -- (j) --(l);
    \draw (b) -- (b1);
    \draw (e) -- (e1);
    \draw (h) -- (h1);
    \draw (j) -- (k1);
    \draw (c) to[bend left] (f);
    \draw (d) to[bend right] (i);
\end{tikzpicture}
\caption{Depiction of the process of obtaining a cherry spinal tree-child network from a non-cherry one.\label{fig:marked_alternative_degenerate}}
\end{figure}

\subsection{Recursive specification of marked trees}

In this subsection we give a recursive specification for the class $\MT$ of marked trees. We shall use combinatorial classes and the symbolic method, as presented in \cite{flajolet2009analytic}. Apart from the standard constructors of disjoint union, product and sequence, we will also use the boxed product \cite[\S II.6]{flajolet2009analytic}, which is a key construction in our specifications. 

We will use combinatorial classes that are biweighted, corresponding to the number of leaves and reticulations, and bilabeled, corresponding to the labels of the leaves and of the labeled elementary nodes (equivalently, the sequence of the reticulations along the spine of the reconstructed spinal network). In order to distinguish them, the labels of the leaves will be  integers, while those of elementary nodes will be symbols $r_i$, indexed by integers.

Let $\mathcal{L}$ be the atomic class formed by a single labeled leaf, and $\mathcal{R}$ the atomic class formed by a single labeled elementary node. 

One remark needs to be made regarding the interpretation of the boxed product when applied to biweighted classes, since the restriction on the labels can refer to any of the two sets of labels. In our setting, the boxed product $\cA\boxprod\cB$ will indicate that the smallest label of the $\cR$-atoms in the resulting structure lies in the $\cA$-component. In particular, the elements in the boxed product $\cR\boxprod \cB$ are constructed by taking the pair formed by the single atom in $\cR$, which gets the smallest label, namely $r_0$, and an element of $\cB$ where each atom labeled with say $r_i$ is relabeled to $r_{i+1}$, for $i\geq 0$.

\begin{prop}\label{prop:especif_marked_trees}
The class $\MT$ admits the following recursive specification:
$$\MT = \mathcal{R} \boxprod \big(\Seq(\MT \sqcup \mathcal{L}) \star \mathcal{L}\big).$$
\end{prop}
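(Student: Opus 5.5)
We prove the specification by giving a size- and label-preserving bijection between $\MT$ and $\cR\boxprod\big(\Seq(\MT\sqcup\cL)\star\cL\big)$, built from the recursive description of $\MT$ stated just before the proposition. Take a marked tree $T\in\MT$ with root $r_0$ and first spinal subpath $P_0=(r_0,x_1,\dots,x_m,s_1)$. By the intrinsic description, $P_0$ is nontrivial, so $m\ge 1$. Each intermediate node $x_j$ has exactly one child off $P_0$, which is either a leaf or the first node $r_i$ of another spinal subpath. Since $P_0$ is a path of elementary and binary nodes, it is determined by the ordered list of these off-path children. The last intermediate node $x_m$ must have a leaf as its off-path child. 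The elementary node $s_1$ is unlabeled and carries no information, so it is reconstructed for free. This yields the skeleton: an $\cR$-atom for $r_0$, followed by a sequence of $m-1$ items each in $\MT\sqcup\cL$, followed by a final $\cL$ item for $x_m$. The sequence may be empty, matching the case $m=1$.

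Next we check that, whenever $x_j$ has an off-path child $r_i$, the subtree $T_j$ hanging below $x_j$ is itself a marked tree. Its spinal subpaths are exactly those $P_{i'}$ reachable from $P_i$. Every path entered from $P_i$ has a larger index, so $T_j$ is rooted at its smallest labeled elementary node. The four conditions of the intrinsic description are inherited, because they are local to each path and its off-path children. After order-preserving relabeling of its $r$-labels and leaf labels, $T_j$ is an element of $\MT$. Conversely, any tuple in the right-hand side, with labels distributed as in labeled products, glues back into a tree satisfying the intrinsic conditions. Acyclicity is automatic, since no reticulation arcs are present in $T$. The constraint ``$j>i$'' holds because each sub-structure's labels are larger than its root's.

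The main obstacle is the label bookkeeping, which is what justifies the boxed product. The $r$-labels of $T$ are not arbitrary. The ordering $r_0<r_1<\cdots$ follows the spine order of the reconstructed network, and within any sub-tree its root must carry the smallest label among that sub-tree's $r$-labels. We must show the relabeling is bijective. Given any decomposition, the $r$-labels and leaf labels are shuffled among the components of $\Seq(\MT\sqcup\cL)\star\cL$ in all ways consistent with each component's internal order. The minimal $r$-label is forced onto the $\cR$-atom, which is exactly the convention for $\boxprod$ fixed before the proposition. The delicate point is that the spine order of $N$ must be an arbitrary interleaving of the sub-trees' internal orders, subject only to each sub-tree's root preceding its own descendants. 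We would verify this by reading off, in $N$, that reticulations from different hanging sub-trees are placed along the spine independently. Equivalently, the labels $r_i$ of $\MT$, by the intrinsic description, only require increasing labels along descending path-entries. That is precisely the increasing-labeling condition generated recursively by the boxed product. With this, the two classes have the same elements with the same weights, which proves the specification.
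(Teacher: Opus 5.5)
Your overall plan matches the paper's: peel off the spinal subpath $P_0$, record its off-path attachments as a sequence, check that hanging subtrees are again marked trees, and let the boxed product force the smallest $r$-label onto $r_0$. Your heredity and label-bookkeeping arguments are fine. The error is in how you match $P_0=(r_0,x_1,\dots,x_m,s_1)$ to the skeleton $\cR\boxprod(\Seq(\MT\sqcup\cL)\star\cL)$.

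The unlabeled elementary node $s_1$ is not information-free. It has exactly one child, its non-spinal child in $N$, which is a leaf, and that leaf is what the final $\cL$ must encode. Conversely, nothing forces the off-path child of the last binary node $x_m$ to be a leaf. Nor does anything force $m\ge 1$: nontriviality of $P_0$ only says $r_0\neq s_1$. Both happen in Figure~\ref{fig:marked_alternative}. There the last binary node of $P_0$ has off-path child $r_2$, and $P_2=(r_2,s_3)$ and $P_3=(r_3,s_4)$ have no binary nodes at all. Likewise, the unique element of $\MT_{1,1}$ is $r_0\to s_1\to$ leaf.

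So your map is undefined on such trees. Where it is defined, it forgets the leaf below each terminal node $s_{i+1}$, so it does not preserve the number of $\cL$-atoms. What you have described is the true class shifted by one node along each subpath, with your $x_m$ playing the role of the real $s_{i+1}$. That class is equinumerous, which is why you arrive at the right specification, but your bijection does not prove it. (The phrase ``last intermediate node'' in the intrinsic description has to be read as the terminal node $s_{i+1}$, not $x_m$.)

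The repair is immediate. Let the sequence record the off-path children of all binary nodes $x_1,\dots,x_m$, with $m\ge 0$ and no constraint on $x_m$; each such child is a leaf or the root of a marked tree. Let the final $\cL$ be the leaf child of $s_1$, and then $s_1$ really is recovered for free. With this correction your argument coincides with the paper's. The paper attaches a leaf or a marked tree at every vertex of the path after the root and requires a leaf at the final vertex.
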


\begin{proof}
    From the recursive definition of $\MT$ given in the previous subsection, an element of $\MT$ can be viewed as a path starting at the root, where at each vertex different from the root we attach either a leaf or another marked tree, with the additional requirement that a leaf is attached to the final vertex.

    Since the relevant information of this construction is the starting node of the path, which is an atom in $\cR$ with its corresponding label, and the sequence of either leaves or marked trees we hang along the path (with the restriction that the last one must be a leaf), which are elements either in $\MT$ or $\cL$, this gives us the recursive specification stated in the proposition. Note that the fact that we use the boxed product ensures that, whenever we consider a path starting at a labeled elementary node, the roots of the attached marked trees are labeled with larger labels, as required by the definition of $\MT$.
\end{proof}

This result can be adapted to the case where the leaves are unlabeled.

\begin{prop}\label{prop:especif_non_labeled}
The class $\NLMT$ admits the recursive specification
\[
\NLMT \;=\; \cR \boxprod \Big(\Seq(\NLMT \sqcup \cL)\times \cL\Big).
\]
\end{prop}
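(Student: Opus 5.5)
The plan is to repeat the argument of Proposition~\ref{prop:especif_marked_trees}, now in a setting where the leaves are unlabeled atoms and only the $\cR$-atoms carry labels. I start from the recursive description of $\NLMT$, which transfers \emph{mutatis mutandis} from the labeled case. A tree in $\NLMT$ consists of a spinal subpath $P_0$ beginning at the root, labeled $r_0$, and ending at an unlabeled elementary node $s_1$. Each intermediate node of $P_0$ has exactly one child off the path. That child is either an (unlabeled) leaf or the root of another marked tree, and the last intermediate node must carry a leaf.

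The decomposition proceeds in three steps.

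\textbf{Step 1 (reading the structure).} Read the off-path children of the intermediate nodes of $P_0$ in order from $r_0$ down to $s_1$. This gives a finite sequence of objects, each an element of $\NLMT$ or of $\cL$, whose last term is required to be in $\cL$. Conversely, such a sequence determines the tree uniquely. We build a path with one intermediate node per term, hang the terms on those nodes in order, and close the path with an unlabeled elementary node. The label information is recovered in the same way as in the labeled case: each unlabeled elementary node is identified as $s_{i+1}$ by walking upward to the first labeled node $r_i$.

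\textbf{Step 2 (translating to constructors).} Since leaves are unlabeled, no relabeling is needed on the $\cL$-side when forming products. The last leaf is a single unlabeled atom, so appending it is a plain Cartesian product $\times\,\cL$ rather than a labeled product $\star$. The sequence part is $\Seq(\NLMT\sqcup\cL)$. Within it, the relabelings of the $r$-labels across different components are handled by the labeled product on the $\cR$-labels only, exactly as before.

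\textbf{Step 3 (the boxed product).} The root atom $r_0$ is attached through $\cR\boxprod(\cdot)$, which forces it to carry the smallest $r$-label. Every subtree hanging from $P_0$ then has all its $r$-labels larger than $r_0$. This is precisely the condition $j>i$ that guarantees acyclicity of the reconstructed network. Recursively, the same condition holds inside each subtree.

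I expect the main obstacle to be the bookkeeping of the mixed setting. The specification must be read as labeled in the $\cR$-atoms and unlabeled in the $\cL$-atoms. I therefore need to check carefully that the unlabeled leaves introduce no spurious symmetries, so that the map from trees to sequences is a genuine bijection rather than a quotient. This holds because the path order distinguishes every position on $P_0$, and the $r$-labels order the subtrees. Hence an unlabeled marked tree has no nontrivial automorphisms, and the decomposition is unique.
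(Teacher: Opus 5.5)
Your proposal is correct and follows the paper's argument: you adapt the labeled specification of Proposition~\ref{prop:especif_marked_trees}, replace the labeled product with the final leaf by a Cartesian product (leaves carry no labels), and keep the boxed product on the $\cR$-labels to enforce the increasing condition $j>i$. Your remark that these marked trees have no nontrivial automorphisms, so the path-ordered decomposition is a genuine bijection rather than a quotient, is a point the paper leaves implicit. One wording fix: the mandatory final $\cL$-term is the single child of the elementary node $s_1$ itself, not the child of a further intermediate node followed by a childless $s_1$.
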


\begin{proof}
This result follows by adapting Proposition~\ref{prop:especif_marked_trees} to a setting in which leaves are unlabeled, while elementary nodes retain their labels. More precisely, the labeled product inside the parenthesis in Proposition~\ref{prop:especif_marked_trees} is substituted by a simple product. The boxed product, however, must be retained, since it accounts for the labels of elementary nodes. Notice that since only elementary nodes are labeled, there is no ambiguity as to the set of labels to which the restriction imposed by the boxed product applies. 
\end{proof}

\subsection{Counting spinal tree-child networks by generating functions}

We now translate the recursive specification of Proposition~\ref{prop:especif_marked_trees} into generating functions in the labeled setting. 
We use two variables: $x$ marks leaves and $z$ marks labeled elementary vertices. Let
\[
S(x,z)=\sum_{n,k\ge 0}\frac{s_{n,k}}{n!\,k!}\,x^n z^k
\]
be the bivariate exponential generating function of  $\MT$, where $n$ counts  (labeled) leaves  and $k$ counts (labeled) elementary vertices.



\begin{prop}\label{prop:s_coef}
The coefficients $s_{n,k}$ satisfy the following explicit formulation:
    $$s_{n,k}=\left\{ \begin{array}{ccl}
   0 & if  & n<k \\
   \displaystyle\frac{n!\cdot (n+k-2)!}{2^{k-1}(k-1)!(n-k)!}& if  & n\geq k
\end{array}\right.$$
\end{prop}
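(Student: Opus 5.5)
The plan is to turn the recursive specification of Proposition~\ref{prop:especif_marked_trees} into a functional equation for $S(x,z)$, solve it in closed form, and then extract coefficients.

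\textbf{Step 1 (functional equation).} The inner class $\Seq(\MT\sqcup\cL)\star\cL$ has exponential generating function $x/(1-x-S)$, by the standard rules for labeled sequence and product. The boxed product $\cR\boxprod\cB$ is taken with respect to the $\cR$-labels, so $z$ is the variable carrying the box constraint. By the rule in \cite[\S II.6]{flajolet2009analytic} it translates into integration in $z$:
\[
S(x,z)=\int_0^z \frac{x}{1-x-S(x,t)}\,dt,
\qquad\text{so}\qquad
\partial_z S\,(1-x-S)=x,\quad S(x,0)=0 .
\]

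\textbf{Step 2 (closed form).} Integrating in $z$ gives $(1-x)S-\tfrac12 S^2=xz$. Of the two roots of this quadratic, we keep the one vanishing at $z=0$:
\[
S(x,z)=(1-x)-\sqrt{(1-x)^2-2xz}=(1-x)\Bigl(1-\sqrt{1-u}\Bigr),
\qquad u=\frac{2xz}{(1-x)^2}.
\]

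\textbf{Step 3 (coefficient extraction).} This is the main computational step, but it is routine. Expand
\[
1-\sqrt{1-u}=\sum_{k\ge1} c_k u^k,\qquad c_k=\frac{(2k-2)!}{2^{2k-1}k!\,(k-1)!}.
\]
Then
\[
S=\sum_{k\ge1}c_k\,2^k x^k z^k (1-x)^{1-2k}.
\]
The factor $(1-x)^{-(2k-1)}$ contributes $\binom{n+k-2}{n-k}=\frac{(n+k-2)!}{(n-k)!\,(2k-2)!}$ to the coefficient of $x^{n-k}$. Multiplying these pieces, the $(2k-2)!$ factors cancel and $2^k/2^{2k-1}=2^{1-k}$, so
\[
[x^nz^k]S=\frac{(n+k-2)!}{2^{k-1}k!\,(k-1)!\,(n-k)!}\qquad(n\ge k\ge1).
\]
The coefficient is zero whenever $n<k$, because every term carries $x^kz^k$ times a power series in $x$.

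\textbf{Step 4 (conclusion).} Multiplying by $n!\,k!$ gives the stated formula for $s_{n,k}$. As a sanity check, $k=1$ yields $s_{n,1}=n!$, which counts caterpillar-like single paths with $n$ labeled leaves, as it should.

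The points needing care are two. First, the boxed product must act on the $z$-variable only, giving integration in $z$ rather than in $x$. Second, the correct branch of the square root must be chosen, namely the one with $S(x,0)=0$. The case $k=0$ is implicitly excluded in the statement, since $S$ has no $z^0$ term.
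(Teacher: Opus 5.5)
Your proposal is correct and follows the paper's proof essentially step for step. It uses the same integral equation coming from the boxed product in $z$, the same separable ODE with closed form $S=1-x-\sqrt{(1-x)^2-2xz}$ chosen by the condition $S(x,0)=0$, and the same final coefficient extraction from $x^k(1-x)^{1-2k}$. The only cosmetic difference is how you get the $z^k$-coefficient: you expand $1-\sqrt{1-u}$ as a binomial series, whereas the paper states $S^{(k)}(x,0)=(2k-3)!!\,x^k/(1-x)^{2k-1}$ directly. Your quantity $2^k c_k=(2k-3)!!/k!$ is exactly that, so your version also supplies the justification the paper leaves implicit.
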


\begin{proof}
From Proposition~\ref{prop:especif_marked_trees}, we apply \cite[Theorem II.5]{flajolet2009analytic} with respect to the variable $z$, treating $x$ as a parameter. Then

\begin{equation}\label{eqSint}
S(x,z)=\int_{0}^z \frac{x}{1-(x+S(x,t))} dt.
\end{equation}

By differentiating under the integral sign (all derivatives are with respect to $z$)
\begin{equation}\label{eqS'}
    S'(x,z)=\frac{x}{1-x-S(x,z)}
\end{equation}

Equation~(\ref{eqSint}) gives $S(x,0)=0$ and equation~(\ref{eqS'}) gives $S'(x,0)=x/(1-x)=x+x^2+x^3+\cdots$.

Note that $S'(x,0)=\sum_{n\geq 1} \frac{s_{n,1}}{n!} x^n$, so we get that $s_{n,1}=n!$, which one can easily check is what it should be (spinal networks with just the trivial reticulation are a path of leaves).

Equation~(\ref{eqS'}) is a first-order separable differential equation, and can therefore be solved explicitly. The solution is given by

  $$S(x,z) = 1-x-\sqrt{(1-x)^2-2xz}$$

Differentiating this expression with respect to $z$ and evaluating at $z = 0$, we obtain

$$\frac{d^mS}{dz^m}(x,0)  = S^{(m)}(x,0) = (2m-3)!!\cdot \frac{x^m}{(1-x)^{2m-1}}$$

This expression can be rewritten in terms of a generating function as

\begin{equation}\label{eqSm}
 S^{(m)}(x,0)  = \sum_{n\geq m} (2m-3)!!\cdot {n+m-2\choose 2m-2}x^n\end{equation}

The coefficients $s_{n,k}$ can be obtained from equation~(\ref{eqSm}) using the identity

$$S^{(k)}(x,0) = \sum_{n\geq 1} \frac{s_{n,k}}{n!} x^n.$$

More precisely, for $n\ge k$ we obtain
\[
\frac{s_{n,k}}{n!}=(2k-3)!!\binom{n+k-2}{2k-2}.
\]
Using $(2k-3)!!=\frac{(2k-2)!}{2^{k-1}(k-1)!}$ and
$\binom{n+k-2}{2k-2}=\frac{(n+k-2)!}{(2k-2)!(n-k)!}$, we get
\[
s_{n,k}=\frac{n!(n+k-2)!}{2^{k-1}(k-1)!(n-k)!},
\]
and $s_{n,k}=0$ for $n<k$.
\end{proof}

By combining Proposition~\ref{prop:marked} with Proposition~\ref{prop:s_coef}, this yields Theorem \ref{teo:stccount}, the explicit enumeration formula for the class $\STC_{n,k}$.

\begin{prop}\label{prop:stc_formula_gf}
For $n>1$ we have
\[
|\STC_{n,k}| \;=\; \frac{n!(n+k-2)!(n+3k-1)}{2^{k+1}k!(n-k-1)!}.
\]
\end{prop}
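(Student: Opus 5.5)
The plan is to substitute the explicit coefficients of Proposition~\ref{prop:s_coef} into the decomposition of Proposition~\ref{prop:marked}, and then simplify algebraically. First we identify $|\MT_{n-1,k+1}|$ with $s_{n-1,k+1}$ and $|\MT_{n-1,k}|$ with $s_{n-1,k}$. Here $s_{n,k}$ counts marked trees with $n$ labeled leaves and $k$ labeled elementary nodes, which is exactly what the specification of Proposition~\ref{prop:especif_marked_trees} enumerates. We also check the range: for $n>1$ and $k\le n-1$ we have $n-1\ge k$. The first term then requires $n-1\ge k+1$. When $k=n-1$ the first term vanishes, and the target formula still makes sense because $(n-k-1)!=0!$.

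Substituting, the first term becomes
\[
\frac n2\cdot\frac{(n-1)!\,(n+k-2)!}{2^{k}k!\,(n-k-2)!}.
\]
The second term becomes
\[
n(n+k-2)\cdot\frac{(n-1)!\,(n+k-3)!}{2^{k-1}(k-1)!\,(n-k-1)!}
=\frac{n!\,(n+k-2)!}{2^{k-1}(k-1)!\,(n-k-1)!}.
\]
Next we factor out the common expression $\frac{n!(n+k-2)!}{2^{k+1}k!(n-k-1)!}$. The first term contributes a factor $(n-k-1)$, and the second contributes a factor $4k$. Their sum $(n-k-1)+4k=n+3k-1$ gives exactly the claimed formula.

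We must also handle the boundary cases separately. When $k=0$, the second term involves $s_{n-1,0}$, which should be $0$ since a marked tree needs a root $r_0$. The formula of Proposition~\ref{prop:s_coef} contains $(k-1)!$ with $k=0$, so we interpret it via the factor $k$ (that is, $4k=0$). This is consistent with the non-cherry class being empty when $k=0$. When $k=n-1$, we interpret $1/(n-k-2)!$ as $0$, so the first term vanishes, which matches $(n-k-1)=0$.

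The main obstacle is not the algebra. It is making sure that Proposition~\ref{prop:marked} is valid uniformly, including the degenerate cases $k=0$ (no non-cherry networks) and $k=n-1$ (no cherry networks with $k$ reticulations, since $\MT_{n-1,n}$ is empty). We also need the index conventions of $s_{n,k}$ to match those of $\MT$. I would verify these edge cases directly, for instance $n=2$ with $k=0,1$, against the formula.
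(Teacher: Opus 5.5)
Your proposal is correct and follows the paper's proof: substitute $s_{n-1,k+1}$ and $s_{n-1,k}$ from Proposition~\ref{prop:s_coef} into Proposition~\ref{prop:marked}, factor out $\frac{n!(n+k-2)!}{2^{k+1}k!(n-k-1)!}$, and collect $(n-k-1)+4k=n+3k-1$. Your treatment of the boundary cases is a useful addition that the paper leaves implicit: for $k=0$ you use $s_{n-1,0}=0$, which holds since $S(x,0)=0$, and for $k=n-1$ you use $s_{n-1,n}=0$.
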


\begin{proof}
By Proposition~\ref{prop:marked},
\[
|\STC_{n,k}|=\frac{n}{2}\,s_{n-1,k+1}+n(n+k-2)\,s_{n-1,k}.
\]
Using Proposition~\ref{prop:s_coef} we get
\[
\frac{n}{2}\,s_{n-1,k+1}
=\frac{n}{2}\cdot\frac{(n-1)!(n+k-2)!}{2^{k}k!(n-k-2)!}
=\frac{n!(n+k-2)!}{2^{k+1}k!(n-k-2)!},
\]
and
\[
n(n+k-2)\,s_{n-1,k}
= n(n+k-2)\cdot\frac{(n-1)!(n+k-3)!}{2^{k-1}(k-1)!(n-k-1)!}
=\frac{n!(n+k-2)!}{2^{k-1}(k-1)!(n-k-1)!}.
\]
Factoring out $\frac{n!(n+k-2)!}{2^{k+1}k!(n-k-1)!}$ yields
\[
|\STC_{n,k}|
=\frac{n!(n+k-2)!}{2^{k+1}k!(n-k-1)!}\,\bigl((n-k-1)+4k\bigr)
=\frac{n!(n+k-2)!(n+3k-1)}{2^{k+1}k!(n-k-1)!},
\]
as claimed.
\end{proof}

\subsection{Counting non-labeled spinal tree-child networks}

    Proceeding as in the labeled case, we can define the generating function of $\NLMT$ as $D(x,z) = \sum_{n,k\geq 1} \frac{d_{n,k}}{k!}x^nz^k$.

\begin{prop}\label{prop:d_coef}
The coefficients $d_{n,k}$ satisfy the following explicit formulation:
   $$d_{n,k}=\left\{ \begin{array}{ccl}
   0 & if  & n<k \\
   \displaystyle\frac{(n+k-2)!}{2^{k-1}(k-1)!(n-k)!}& if  & n\geq k
\end{array}\right. $$
\end{prop}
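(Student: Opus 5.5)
The plan is to mirror the proof of Proposition~\ref{prop:s_coef}, now starting from the specification of Proposition~\ref{prop:especif_non_labeled}. Leaves are unlabeled, so $x$ is an ordinary variable, while $z$ stays exponential for the labeled elementary nodes. Under this mixed (ordinary in $x$, exponential in $z$) convention, the constructions translate as follows. The sequence $\Seq(\NLMT\sqcup\cL)$ gives $1/(1-x-D)$. The cartesian product with $\cL$ multiplies by $x$. The boxed product with $\cR$, taken with respect to the $\cR$-labels, becomes integration in $z$ by \cite[Theorem II.5]{flajolet2009analytic}. We therefore obtain
\[
D(x,z)=\int_0^z \frac{x}{1-x-D(x,t)}\,dt,
\]
which is exactly equation~(\ref{eqSint}) with $S$ replaced by $D$. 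One point must be justified here: the product inside the boxed product is a plain product only in the leaf variable. The $\cR$-labels of the hanging subtrees still have to be distributed among the components, so the product is labeled in $z$. This is precisely what makes $\Seq$ translate into $1/(1-\cdot)$ in the exponential variable $z$ as well. I expect this justification to be the main conceptual step.

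Once the equation is identical, uniqueness of the power-series solution with $D(x,0)=0$ gives $D=S$ as formal series. The differential equation $D'=x/(1-x-D)$ determines all coefficients in $z$ recursively from $D(x,0)=0$, so $D$ coincides with $S$. Hence
\[
D(x,z)=1-x-\sqrt{(1-x)^2-2xz}.
\]
Comparing coefficients then gives the result. In $S$ the coefficient of $x^nz^k$ is $s_{n,k}/(n!\,k!)$, while in $D$ it is $d_{n,k}/k!$. Therefore $d_{n,k}=s_{n,k}/n!$, and Proposition~\ref{prop:s_coef} yields
\[
d_{n,k}=(2k-3)!!\binom{n+k-2}{2k-2}=\frac{(n+k-2)!}{2^{k-1}(k-1)!(n-k)!}
\]
for $n\ge k$, and $0$ otherwise.

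As a sanity check, I will verify that $d_{n,1}=1$. This should hold because the unique unlabeled marked tree with one elementary label is a path with $n$ unlabeled leaves hanging from it. I will also cross-check the result against Proposition~\ref{prop:marked_unlabeled} and the earlier corollary. Since $d_{n,k}=s_{n,k}/n!$, a direct coefficient computation shows that $d_{n-1,k+1}+(n+k-2)d_{n-1,k}$ equals $(n-1+k)!/(2^k(n-1-k)!\,k!)$. This agrees with the word-based count of $|\NLSTC_{n,k}|$ and confirms the whole chain.
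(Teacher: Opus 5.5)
Your proposal is correct and takes the same route as the paper. The unlabeled specification gives the same integral equation as for $S$, so $D=S$ and hence $d_{n,k}=s_{n,k}/n!$. You also spell out two points the paper's brief proof leaves implicit: the sequence and product must stay labeled in the $\cR$-labels, so they still translate to $1/(1-\cdot)$ in the mixed ordinary/exponential setting, and the formal solution with $D(x,0)=0$ is unique.
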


\begin{proof}
The argument is analogous to the one used in Proposition~\ref{prop:s_coef}, which treated the labeled case.  The combinatorial decomposition of the objects is identical, and therefore the counting argument yields the same factorial expression. However, in this situation, we are working with a generating function that is ordinary with respect to $x$, since the structures are not labeled. Consequently, the coefficients $d_{n,k}$ in $D(x,z)$ are not divided by $n!$, unlike in the exponential generating function used in the labeled setting. Removing this factor we obtain the result.
\end{proof}

By direct combination of Propositions \ref{prop:marked_unlabeled} and \ref{prop:d_coef}, we obtain the explicit enumeration formula for the class \( \NLSTC_{n,k} \). 

\begin{prop}
    For $n > 1$ we have
$$|\mathcal{NLSTC}_{n,k} |=  \frac{(n+k-1)!}{2^kk!(n-k-1)!}.$$
\end{prop}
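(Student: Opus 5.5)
The plan is to substitute the explicit values of Proposition~\ref{prop:d_coef} into the identity of Proposition~\ref{prop:marked_unlabeled}, in exact parallel with the proof of Proposition~\ref{prop:stc_formula_gf}:
\[
|\NLSTC_{n,k}| = d_{n-1,k+1} + (n+k-2)\,d_{n-1,k}.
\]
The first term is the cherry part and the second the non-cherry part.

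For the first term, I will use Proposition~\ref{prop:d_coef} with $(n,k)$ replaced by $(n-1,k+1)$. This gives
\[
d_{n-1,k+1}=\frac{(n+k-2)!}{2^{k}k!\,(n-k-2)!},
\]
valid when $n-1\ge k+1$, and zero otherwise. For the second term, I will use $(n-1,k)$, which gives
\[
(n+k-2)\,d_{n-1,k}=(n+k-2)\cdot\frac{(n+k-3)!}{2^{k-1}(k-1)!\,(n-k-1)!}=\frac{(n+k-2)!}{2^{k-1}(k-1)!\,(n-k-1)!}.
\]

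Next I factor out $\frac{(n+k-2)!}{2^k k!(n-k-1)!}$. The first term contributes the factor $(n-k-1)$. The second term contributes $2k$, since $\frac{1}{2^{k-1}(k-1)!}=\frac{2k}{2^k k!}$. The bracket is therefore $(n-k-1)+2k=n+k-1$, which combines with $(n+k-2)!$ into $(n+k-1)!$. This yields the claimed formula $\frac{(n+k-1)!}{2^k k!(n-k-1)!}$, and it matches the corollary obtained via the word encoding.

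The only real obstacle is the boundary cases, where the formal substitution must be checked against the conventions $d_{n,k}=0$ for $n<k$, and where $d_{n,0}$ and negative factorials must be read properly.
\begin{itemize}
\item When $k=n-1$, the first term vanishes. The formula is still correct, because the factor $(n-k-1)$ is then $0$, consistent with reading $1/(-1)!=0$.
\item When $k=0$, the second term involves $d_{n-1,0}$, which is $0$ because no marked tree lacks the root label. The factor $2k$ vanishes accordingly.
\item When $k\ge n$, both sides are zero.
\end{itemize}
I will check these three cases separately and then note that the general algebra covers $1\le k\le n-2$.
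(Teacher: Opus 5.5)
Your proposal is correct and follows the paper's route exactly: the paper obtains this formula by directly combining Proposition~\ref{prop:marked_unlabeled} with the coefficients of Proposition~\ref{prop:d_coef}, and your algebra is the unlabeled analogue of the proof of Proposition~\ref{prop:stc_formula_gf} (factoring out $\frac{(n+k-2)!}{2^k k!(n-k-1)!}$ leaves the bracket $(n-k-1)+2k=n+k-1$). Your explicit treatment of the boundary cases $k=0$, $k=n-1$ and $k\ge n$ is a useful addition that the paper leaves implicit.
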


\section{Conclusions}\label{sec:conclusions}


In this work we gave two complementary combinatorial derivations for the enumeration of spinal tree-child networks: a word-encoding framework that yields an explicit closed count for the unlabeled class (and recovers the labeled formula via the labeled/unlabeled relation), and a symbolic approach based on marked graphs leading to a solvable bivariate generating function.

There are several natural generalizations of spinal tree-child networks that we plan to investigate in future work; for instance, allowing more general tree-shaped attachments off the main spine (subject to additional structural constraints) instead of restricting to single leaves.

\section*{Acknowledgments}
JCP and PV would like to thank the Institute for Computational and Experimental Research in
Mathematics (ICERM) in Providence, Rhode Island, for the welcoming environment that supported part of the writing of this article. PV, JCP and GC were supported in part from Grant PID2021-126114NB-C44 funded 
by MCIN/AEI/ 10.13039/501100011033.   The second author was supported by the Grant PID2023-147202NB-I00 funded by \\MICIU/AEI/10.13039/501100011033.

\section*{Conflict of interest}
On behalf of all authors, the corresponding author states that there is no conflict of interest.

\bibliographystyle{alpha}
\bibliography{references}











\end{document}